\newcommand{\seqnum}[1]{\href{http://oeis.org/#1}{\underline{#1}}}
\newcommand*{\N}{4}
\newcommand*{\K}{6}
\newcommand*{\ADR}{University of Sopron,  Institute of Mathematics, 9400 Sopron, Bajcsy~Zs.~u.~4., Hungary. $\ \ \ \ $ \textit{nemeth.laszlo@uni-sopron.hu}}
\tikzset{>=stealth',every on chain/.append style={join},
         every join/.style={->}}
\tikzstyle{labeled}=[execute at begin node=$\scriptstyle,
\def\binomtwo#1#2{\binom{#1}{#2}_{\!\!2}}
\def\binomtwos#1#2{\left[\!\!{\binom{#1}{#2}}\!\!\right]_{\!2}}
\def\T{${\cal T}$}
\theoremstyle{plain}
\newtheorem{theorem}{Theorem}
\newtheorem{corollary}[theorem]{Corollary}
\theoremstyle{definition}
\newtheorem{remark}[theorem]{Remark}
\newcommand*{\TIT}{The trinomial transform triangle}
\title{\bf \TIT}
\author{L\'aszl\'o N\'emeth\footnote{\ADR}}
\date{}
\begin{document}

\maketitle

\begin{abstract}
	The trinomial transform of a sequence is a generalization of the well-known binomial transform, replacing binomial coefficients with trinomial coefficients.  We examine Pascal-like triangles under trinomial transform, focusing on the ternary linear recurrent sequences. We determine the sums and alternating sums of the elements in columns, and we give some examples of the trinomial transform triangle.    \\[1mm]
	{\em Key Words: binomial transform, trinomial transform, trinomial coefficient, recurrent sequence, Fibonacci sequence, Tribonacci sequence.}\\
	{\em MSC code: 11B37, 11B65, 11B75, 11B39.} \\
	The  final  publication  is  available  at \href{https://cs.uwaterloo.ca/journals/JIS/}{Journal of Integer Sequences}. 
	\end{abstract}


\section{Introduction}

The binomial transform of a sequence $(a_i)_{i=0}^\infty$ is a sequence $(b_n)_{n=0}^\infty$ defined by $b_n=\sum_{i=0}^{n}\binom{n}{i} a_i$.
This transformation is invertible with formula $a_n=\sum_{i=0}^{n}\binom{n}{i} (-1)^{n-i} b_i$. Several studies  \cite{Barbero,Nemeth_binom,Pan,Spivey} examine the properties and the generalizations of the binomial transform. 
One generation is the \textit{trinomial transform}. Let it be given by 
\begin{equation*}\label{eq:trinomial_trans}
	b_n=\sum_{i=0}^{2n}\binomtwo{n}{i} a_i,
\end{equation*}
where  for $0\leq i\leq 2n$
\begin{equation*}\label{eq:trinom}
	\binomtwo{n}{i}=\sum_{j=0}^{i}\binom{n}{j}\binom{j}{i-j}
\end{equation*}
holds with the classical binomial coefficients. It is known that the trinomial triangle (Figure~\ref{fig:trinom}) determines the trinomial coefficients  $\binomtwo{n}{i}$ which arise  in the  expansion of $(1+x+x^2)^n$. (For example, Belbachir et al.\  \cite{Bel2008} discussed  some details of the trinomial coefficients, the trinomial tringle and their generalizations.)

To the best of our knowledge, this transformation has not been previously studied or even mentioned in journals, although there are eleven sequences in OEIS \cite{Sloane} without literature, which are trinomial transforms of certain sequences. We will refer to some of them in the last section. 

In this article, we define an arithmetic structure similar to an infinite triangle, where the terms are arranged in rows and columns. Let the trinomial transform triangle \T\  be defined the following way. Row~0 consists of the terms of a given sequence $(a_k)_{k=0}^\infty$,  and any term in an other row is the sum of the three terms directly above it according to Figure~\ref{fig:construction}. The exact definition is 
\begin{eqnarray}\label{eq:defsum}
	a_0^k&=&a_{k},\nonumber\\
	a_n^k&=&a_{n-1}^{k-1}+a_{n-1}^{k}+a_{n-1}^{k+1},\quad (1\leq n\leq k). 
\end{eqnarray}

Let the $\ell^{\text{th}}$ ($\ell\geq0$) diagonal sequence of \T\ be the sequence $(a_n^{n+l})_{n=0}^\infty$. 
We shall give some properties of triangle \T\ and show that the $0^{\text{th}}$ or main diagonal sequence of \T\ is the trinomial transform sequence of  $(a_k)$. Especially,  we show that  the trinomial transform of a ternary linear recurrent sequence  is also a ternary linear recurrent sequence. Moreover, we give some properties of \T, for example, sums and partial sums in the columns. 
The author \cite{Nemeth_binom}  dealt with a generalized binomial transform triangle.

In the last section, we present some special examples for the trinomial transform triangles.

\renewcommand*{\N}{3}
\renewcommand*{\K}{6}

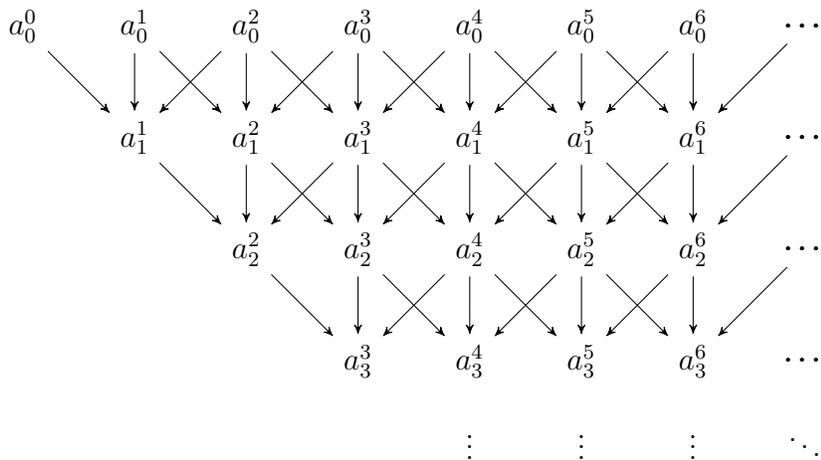
\begin{figure}[h!] \centering \scalebox{0.99}{ 		         
		\begin{tikzpicture}[->,xscale=1.5,yscale=1.5, auto,swap]
		\foreach \i in {0,...,\N}
		\foreach \j in {\i,...,\K}
		{
			\node(a\i \j)  at ({\j},{-\i}) {$a_{\i}^{\j}$};
			\node(a\i)  at ({(1+\K)},{-\i}) {$\cdots$};
		}   
		\foreach \i in {1,...,\N}
		\foreach \j in {\i,...,\K}
		{  \pgfmathtruncatemacro{\x}{(\i - 1) };
			\pgfmathtruncatemacro{\y}{\j -1 };
			\path (a\x \j) edge node {} (a\i \j) ; 
			\path (a\x \y) edge node {} (a\i \j) ;
		}
		\foreach \i in {1,...,\N}
		\foreach \j in {\i,...,{\numexpr\K-1}}
		{  \pgfmathtruncatemacro{\x}{(\i - 1) };
			\pgfmathtruncatemacro{\z}{\j +1};
			\path (a\x \z) edge node {} (a\i \j) ;
		}
		\foreach \i in {1,...,\N}
		{  \pgfmathtruncatemacro{\x}{(\i - 1) };
			\pgfmathtruncatemacro{\z}{\K +1};
			\path (a\x) edge node {} (a\i \K) ;
		}
		\pgfmathtruncatemacro{\i}{\N +1 };
		\foreach \j in {\i,...,{\numexpr\K }}
		{
			\node(a\i \j)  at ({\j},{-\i+0.3}) {$\vdots$};
		}
		\pgfmathtruncatemacro{\j}{(\K + 1) }
		\node(a\i \j)  at ({\j},{-\i+0.3}) {$\ddots$};
		\end{tikzpicture}}
	\caption{Construction of the trinomial transform triangle \T}
	\label{fig:construction}
\end{figure}

\section{Trinomial triangle and its partial sum triangle}

Most of our proofs are based on the elements of the trinomial triangle (or Feinberg's triangle named by Anatriello and Vincenzi \cite{ANVI}) and its partial sum triangle, therefore first of all we define them and give some known basic properties \cite{Andrews,Bel2008}. 

Let the trinomial coefficient $\binomtwo{n}{k}$ 
be the $k^{\text{th}}$ term of the $n^{\text{th}}$ row  in the trinomial triangle, where $0\leq n$, $0\leq k\leq 2n$ (see Figure~\ref{fig:trinom} and  \seqnum{A027907} in OEIS \cite{Sloane}). The terms of the triangle satisfy the relations
$${\binomtwo{n}{0}}={\binomtwo{n}{2n}}=1\qquad (0\leq n),$$
$${\binomtwo{n}{k}}={\binomtwo{n-1}{k-2}} +{\binomtwo{n-1}{k-1}} +\binomtwo{n-1}{k} \qquad (1\leq n).$$

We use the convention $\binomtwo{n}{k}=0$ for $k\notin\{0,1,\ldots,2n\}$.  (For more details and some generalization of binomial and trinomial coefficients see \cite{Bel2008}.)
Now, we take the partial sums of terms in columns of the trinomial triangle, let $\binomtwos{n}{k}$ denote the partial sum of values of $\binomtwo{n}{k}$ and all the terms above it in its column, so that 
\begin{equation*}
	{\binomtwos{n}{k}}= \sum_{j=0}^n {\binomtwo{n-j}{k-j}}. 
\end{equation*}
Without zero elements, we also have
\begin{equation*}
	{\binomtwos{n}{k}}= \sum_{i=|n-k|}^{n} {\binomtwo{i}{i-(n-k)}}=  \sum_{i=|n-k|}^{n} \sum_{j=0}^{i-n+k}\binom{i}{j}\binom{j}{i-n+k-j}. 
\end{equation*}

\begin{figure}[h!] \centering
	\setlength{\tabcolsep}{0pt}
	\begin{tabular}{ccccccccccccccc}
		\phantom{999}& \phantom{999}&  \phantom{999}  & \phantom{999}   & \phantom{999}   & \phantom{9999}   &  1 &  \phantom{9999}  & \phantom{999}   &  \phantom{999}  & \phantom{999}   & \phantom{999} & \phantom{999} \cr
		&   &    &    &    & 1   & 1   & 1   &    &    &    &   &   \\
		&   &    &    & 1  & 2   & 3   & 2   & 1  &    &    &   &   \\
		&   &    & 1  & 3  & 6   & 7   & 6   & 3  & 1  &    &   &   \\
		&   & 1  & 4  & 10 & 16  & 19  & 16  & 10 & 4  & 1  &   &   \\
		& 1 & 5  & 15 & 30 & 45  & 51  & 45  & 30 & 15 & 5  & 1 &   \\
		1 & 6 & 21 & 50 & 90 & 126 & 141 & 126 & 90 & 50 & 21 & 6 & 1
	\end{tabular}
	\caption{Trinomial triangle (\seqnum{A027907})}
	\label{fig:trinom}
\end{figure}

By arranging them into an infinite triangle, we gain the partial sum triangle of trinomial triangle (or partial sum trinomial triangle --- see Figure~\ref{fig:trinom_subsum}). It is not in OEIS yet, but the partial sum sequences of the central trinomial coefficients and of the two neighbouring column sequences do, see \seqnum{A097893}, \seqnum{A097861} and \seqnum{A097894}.

\begin{figure}[h!] \centering 
	\setlength{\tabcolsep}{0pt}
	\begin{tabular}{ccccccccccccccc}
		\phantom{999}& \phantom{999}&  \phantom{999}  & \phantom{999}   & \phantom{9999}   & \phantom{9999}   &  1 &  \phantom{9999}  & \phantom{9999}   &  \phantom{999}  & \phantom{999}   & \phantom{999} & \phantom{999} \cr
		&   &    &    &     & 1   & 2   & 1   &     &    &    &   &   \\
		&   &    &    & 1   & 3   & 5   & 3   & 1   &    &    &   &   \\
		&   &    & 1  & 4   & 9   & 12  & 9   & 4   & 1  &    &   &   \\
		&   & 1  & 5  & 14  & 25  & 31  & 25  & 14  & 5  & 1  &   &   \\
		& 1 & 6  & 20 & 44  & 70  & 82  & 70  & 44  & 20 & 6  & 1 &   \\
		1 & 7 & 27 & 70 & 134 & 196 & 223 & 196 & 134 & 70 & 27 & 7 & 1
	\end{tabular}
	\caption{Partial sum triangle of trinomial triangle}
	\label{fig:trinom_subsum}
\end{figure}

Our major aim in this paper is not to examine the partial sum trinomial triangle, but we give some of its properties that we use in the following sections.

From the vertical symmetry of the trinomial triangle, $\binomtwo{n}{k}=\binomtwo{n}{2n-k}$, the partial sum trinomial triangle also has a symmetry axis and 
$${\binomtwos{n}{k}}={\binomtwos{n}{2n-k}}.$$

Moreover, if $n\geq k$, then 
$${\binomtwos{n}{k}} = \sum_{i=0}^{k} {\binomtwo{n-i}{k-i}}= \sum_{i=0}^{k} \sum_{j=0}^{n-i}\binom{n-i}{j}\binom{j}{k-i-j}.$$

Furthermore, this triangle evidently satisfies the recursive relations below.
\begin{eqnarray}
	\binomtwos{n}{k}&=&\begin{cases}\label{eq:n_n1} 
		1, &\mbox{if } k =0 \mbox{ or } k=2n;\\ 
		\binomtwos{n-1}{k-1}+\binomtwo{n}{k},\qquad &  \mbox{otherwise}.
	\end{cases} \\
	\binomtwos{n}{k}&=&\begin{cases}\label{eq:nksum} 
		1, &\mbox{if } k =0 \mbox{ or } k=2n;\\ 
		n+1, & \mbox{if } k =1 \mbox{ or } k=2n-1;\\
		\binomtwos{n-1}{n-2}+\binomtwos{n-1}{n-1}+\binomtwos{n-1}{n} +1, & \mbox{if } 2\leq n=k; \\
		\binomtwos{n-1}{k-2}+\binomtwos{n-1}{k-1}+\binomtwos{n-1}{k}, & \mbox{otherwise}.
	\end{cases} 
\end{eqnarray}

Theorem~\ref{th:sum_trinom_triangle} provides summation identities for the partial sum trinomial triangle, as a corollary of the last section. 
\begin{theorem}\label{th:sum_trinom_triangle}
	The partial sum trinomial triangle satisfies the following summation expressions 
	\begin{align*}
		\sum_{k=0}^{2n} {\binomtwos{n}{k}}& \ =\ \frac{3^{n+1} - 1}{2},   &  
		\sum_{k=0}^{2n} (-1)^k{\binomtwos{n}{k}}& \ =
		\begin{cases}
			0,& \text{if $n$ is odd};\\
			1,& \text{otherwise},
		\end{cases} \\
		\sum_{k=0}^{2n} k {\binomtwos{n}{k}}  &\ =\ n\frac{3^{n+1} - 1}{2},   &  
		\sum_{k=0}^{2n} (-1)^kk {\binomtwos{n}{k}}  &\ =
		\begin{cases}
			0,& \text{if $n$ is odd or $n=0$};\\
			n,& \text{otherwise}.
		\end{cases} 
	\end{align*}
\end{theorem}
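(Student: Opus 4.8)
The plan is to reduce all four identities to a single mechanism: express each partial sum through the column-sum formula $\binomtwos{n}{k} = \sum_{j=0}^{n} \binomtwo{n-j}{k-j}$, interchange the order of summation over $k$ and $j$, and then evaluate the resulting inner sums over a full row of the trinomial triangle. After the substitution $l = k-j$ (so that $k = l+j$ and the row index becomes $m = n-j$), the inner sum ranges over a complete row $0 \le l \le 2m$, and I can invoke four elementary row-identities obtained from the generating function $(1+x+x^2)^m = \sum_{l=0}^{2m} \binomtwo{m}{l} x^l$. Specifically, evaluating at $x=1$ and at $x=-1$ gives $\sum_l \binomtwo{m}{l} = 3^m$ and $\sum_l (-1)^l \binomtwo{m}{l} = 1$, while differentiating first and then setting $x=1$, respectively $x=-1$, gives $\sum_l l\binomtwo{m}{l} = m\,3^m$ and $\sum_l (-1)^l l\binomtwo{m}{l} = m$.

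First I would establish the two unweighted identities. For the plain sum the inner sum over $l$ collapses to $3^{n-j}$, so the whole expression becomes the geometric series $\sum_{j=0}^{n} 3^{n-j} = \frac{3^{n+1}-1}{2}$. For the alternating sum, writing $(-1)^k = (-1)^{l+j} = (-1)^j(-1)^l$ pulls out a factor $(-1)^j$ and leaves $\sum_l (-1)^l \binomtwo{m}{l} = 1$; the outer sum is then $\sum_{j=0}^{n}(-1)^j$, which equals $1$ for even $n$ and $0$ for odd $n$.

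Next I would treat the two weighted sums, where the weight $k = l+j$ must be split. For $\sum_k k\,\binomtwos{n}{k}$ the inner sum becomes $\sum_l (l+j)\binomtwo{m}{l} = (n-j)3^{n-j} + j\,3^{n-j} = n\,3^{n-j}$, so the factor of $n$ comes out and the result is $n$ times the plain sum, namely $n\,\frac{3^{n+1}-1}{2}$. For the alternating weighted sum the same split gives $(-1)^j\bigl[\sum_l (-1)^l l\binomtwo{m}{l} + j\sum_l (-1)^l \binomtwo{m}{l}\bigr] = (-1)^j\bigl[(n-j) + j\bigr] = (-1)^j n$, and summing over $j$ yields $n\sum_{j=0}^n (-1)^j$, which is $n$ for even $n\ge 2$ and $0$ when $n$ is odd or $n=0$.

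The computations are routine once the interchange is in place; the only real care is needed in the two weighted cases. The main obstacle is bookkeeping: correctly tracking the index shift $l=k-j$ together with the weight $k=l+j$, and getting the sign in the identity $\sum_l (-1)^l l\binomtwo{m}{l} = m$ right, which follows from $\frac{d}{dx}(1+x+x^2)^m = m(1+x+x^2)^{m-1}(1+2x)$ evaluated at $x=-1$, where the factor $1+2x=-1$ supplies the sign. It is worth noting that all four row-identities hold for every $m\ge 0$, including $m=0$, so no separate treatment of the boundary term $j=n$ is required.
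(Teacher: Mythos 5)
Your proof is correct, but it takes a genuinely different route from the paper. The paper does not prove Theorem~1 directly at all: it derives it ``as a corollary of the last section,'' by applying the general column-sum theorem $s_n=\sum_{\ell=0}^{2n}\binomtwos{n}{\ell}a_0^{\ell}$ (itself established by induction via the triangle machinery) to four specific base sequences --- the constant sequence $1$, the natural numbers, $(-1)^k$, and $(-1)^k k$ --- whose transform triangles have closed-form entries $3^n$, $k3^n$, $(-1)^{n+k}$, $(-1)^{n+k}k$, so that the column sums $s_n=\sum_{i=0}^n a_i^n$ can be computed as elementary geometric or telescoping sums; equating the two expressions for $s_n$ gives the four identities. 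You instead work entirely inside the partial sum trinomial triangle: expand $\binomtwos{n}{k}=\sum_{j=0}^{n}\binomtwo{n-j}{k-j}$, interchange the order of summation (correctly noting that for each $j$ the inner sum covers a full row, since the support $j\le k\le 2n-j$ lies inside $[0,2n]$), and evaluate the four row sums from $(1+x+x^2)^m$ and its derivative at $x=\pm1$; your sign bookkeeping in $\sum_l(-1)^l l\binomtwo{m}{l}=m$ and the splitting $k=l+j$ in the weighted cases are both right, including the degenerate cases $m=0$ and $n=0$. What your approach buys is self-containedness and economy: it needs nothing beyond the definition of the partial sums and the generating function, and it would serve as a standalone proof placed where the theorem is stated. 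What the paper's approach buys is that the identities emerge for free from machinery developed for other purposes, and each identity acquires a combinatorial interpretation as a column sum of a concrete trinomial transform triangle.
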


\section{Trinomial transform triangle}

Any term of \T\ can be determined by the terms of sequence $(a_k)$ with trinomial coefficients. Recall $a_0^k=a_k$.

\begin{theorem}\label{th:base}
	For any $0\leq n\leq k$, we have
	\begin{equation}\label{eq:base}
		a_n^{k}=\sum_{i=k-n}^{k+n} {\binomtwo{n}{i-k+n}} a_0^{i}. 
	\end{equation}
\end{theorem}

\begin{proof} The proof is by induction on $n$. 
	For $n=0$ the statement is trivial.
	For clarity in case $n=1$, we have 	 $$a_1^k=a_{0}^{k-1}+a_{0}^{k}+a_{0}^{k+1} =\binomtwo{1}{0}a_{0}^{k-1}+\binomtwo{1}{1}a_{0}^{k}+\binomtwo{1}{2}a_{0}^{k+1}  =\sum_{i=k-1}^{k+1} {\binomtwo{1}{i-k+1}} a_0^{i}.$$
	Let us suppose that the result is true for $n-1$, so, for example,
	$$a_{n-1}^{k}=\sum_{i=k-n+1}^{k+n-1}{\binomtwo{n-1}{i-k+n-1}} a_0^{i}.$$
	\noindent Using \eqref{eq:nksum}, we have 
	\begin{eqnarray*}
		a_{n}^{k}&=&a_{n-1}^{k-1}+a_{n-1}^{k}+a_{n-1}^{k+1}\\
		&=&\sum_{i=k-n}^{k+n-2}{\binomtwo{n-1}{i-k+n}} a_0^{i} +\sum_{i=k-n+1}^{k+n-1}{\binomtwo{n-1}{i-k+n-1}} a_0^{i} +\sum_{i=k-n+2}^{k+n}{\binomtwo{n-1}{i-k+n-2}} a_0^{i}\\
		&=& {\binomtwo{n-1}{0}} a_0^{k-n} +\left({\binomtwo{n-1}{1}} +\binomtwo{n-1}{0} \right)  a_0^{k-n+1} \\
		& & + \left({\binomtwo{n-1}{2}} +{\binomtwo{n-1}{1}} +\binomtwo{n-1}{0} \right) a_0^{k-n+2} +\cdots \\
		& & + \left({\binomtwo{n-1}{i-k+n-2}} +{\binomtwo{n-1}{i-k+n-1}} +\binomtwo{n-1}{i-k+n} \right) a_0^{i} +\cdots \\
		& & + \left({\binomtwo{n-1}{2n-3}} +\binomtwo{n-1}{2n-2} \right)  a_0^{k+n-1} + {\binomtwo{n-1}{2n-2}} a_0^{k+n}\\
		&=& \sum_{i=k-n}^{k+n} {\binomtwo{n}{i-k+n}} a_0^{i}.
	\end{eqnarray*}
\end{proof}

We let $(b_n)$ denote the  $0^{\text{th}}$ diagonal sequence  $(a_n^n)$ in \T.
If $k=n$, then we obtain  the next corollary from Theorem~\ref{th:base}.
\begin{corollary}
	The diagonal sequence $(b_n)$ of the trinomial transform triangle is the trinomial transform sequence of $(a_k)$.
\end{corollary}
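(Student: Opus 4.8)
The plan is to derive this directly from Theorem~\ref{th:base} by specializing the formula \eqref{eq:base} to the diagonal, where $k=n$. Recall that by definition the trinomial transform of $(a_k)$ is the sequence whose $n^{\text{th}}$ term is $\sum_{i=0}^{2n}\binomtwo{n}{i}a_i$, and that $(b_n)=(a_n^n)$, so it suffices to show $a_n^n=\sum_{i=0}^{2n}\binomtwo{n}{i}a_i$.

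First I would observe that Theorem~\ref{th:base} holds for all $0\le n\le k$, and in particular the boundary case $k=n$ is admissible. Setting $k=n$ in \eqref{eq:base}, the summation index $i$ ranges from $k-n=0$ to $k+n=2n$, and the shift in the trinomial coefficient simplifies as $i-k+n=i$. Hence the formula collapses to
\begin{equation*}
	a_n^n=\sum_{i=0}^{2n}\binomtwo{n}{i}a_0^{i}.
\end{equation*}
Since $a_0^i=a_i$ by the definition of row~0 of \T, the right-hand side is exactly $\sum_{i=0}^{2n}\binomtwo{n}{i}a_i$, which is the $n^{\text{th}}$ term of the trinomial transform of $(a_k)$.

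There is essentially no obstacle here: the entire content lies in Theorem~\ref{th:base}, and the corollary is obtained by the single substitution $k=n$ together with the two elementary simplifications of the summation bounds and of the index shift. The only point requiring a moment's care is checking that the stated range of validity $0\le n\le k$ in Theorem~\ref{th:base} indeed permits the equality case $k=n$, which it does. I would therefore keep the proof to a brief verification, displaying the specialized equation and noting the index identifications, rather than reproving anything from scratch.
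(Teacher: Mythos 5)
Your proof is correct and matches the paper's own argument exactly: the paper likewise obtains this corollary by setting $k=n$ in Theorem~\ref{th:base}, so that the sum runs over $i=0,\ldots,2n$ with coefficient index $i-k+n=i$, and then identifies $a_0^i$ with $a_i$. No further review is needed.
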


Let  $(s_n)_{n=0}^\infty$ be the sum sequence of the values of columns in \T, so that
\begin{equation}\label{eq:sumrowdef}
	s_n=\sum_{i=0}^{n}a_i^n.
\end{equation}

\begin{theorem} 
	\begin{equation*}\label{eq:sumrowtri}
		s_n=\sum_{i=0}^{n} \sum_{j=n-i}^{n+i} {\binomtwo{i}{j-n+i}} a_0^{j}
		=\sum_{i=0}^{n} \sum_{k=0}^{2i} {\binomtwo{i}{k}} a_0^{n+k-i}.
	\end{equation*}
\end{theorem}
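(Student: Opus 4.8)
The plan is to reduce the claim directly to Theorem~\ref{th:base}, which already expresses every entry of \T\ in terms of the row-$0$ sequence. Starting from the definition \eqref{eq:sumrowdef}, I would write $s_n=\sum_{i=0}^{n}a_i^n$ and observe that each summand $a_i^n$ is the entry in row $i$ and column $n$ with $0\leq i\leq n$, so Theorem~\ref{th:base} applies after relabelling its indices: its row index becomes $i$ and its column index becomes $n$ (and I would rename its summation variable to $j$ to avoid a clash with the outer index $i$). This yields
$$a_i^n=\sum_{j=n-i}^{n+i}{\binomtwo{i}{j-n+i}}a_0^{j}.$$

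Substituting this expression termwise into $s_n=\sum_{i=0}^{n}a_i^n$ produces the first displayed form of the theorem immediately. The only point requiring attention is that the inner summation range $n-i\leq j\leq n+i$ is precisely the support provided by Theorem~\ref{th:base}, so no boundary terms are dropped or double-counted and the admissibility hypothesis $0\leq i\leq n$ is met throughout the outer sum.

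For the second equality I would carry out the change of summation variable $k=j-n+i$, equivalently $j=n+k-i$. As $j$ ranges from $n-i$ to $n+i$, the index $k$ ranges from $0$ to $2i$; the trinomial coefficient $\binomtwo{i}{j-n+i}$ becomes $\binomtwo{i}{k}$ and $a_0^{j}$ becomes $a_0^{n+k-i}$, turning the inner sum into $\sum_{k=0}^{2i}\binomtwo{i}{k}a_0^{n+k-i}$. The argument is essentially bookkeeping, and I anticipate no genuine obstacle beyond checking that this index shift preserves the summation limits, which is straightforward once Theorem~\ref{th:base} is in hand.
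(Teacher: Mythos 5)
Your proof is correct and is essentially identical to the paper's own (very terse) proof: the paper simply says to substitute Theorem~\ref{th:base} into the definition \eqref{eq:sumrowdef} and then set $k=i+j-n$, which is exactly your relabelled substitution followed by the index shift $k=j-n+i$. Your write-up just makes explicit the bookkeeping (the relabelling of indices in Theorem~\ref{th:base}, the admissibility condition $0\leq i\leq n$, and the preservation of summation limits) that the paper leaves implicit.
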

\begin{proof}
	Substitute \eqref{eq:base} into  \eqref{eq:sumrowdef} and put $k=i+j-n$.
\end{proof}

Let us express the term $s_n$ by the help of the terms of the partial sum  trinomial triangle and for it, first, we have to prove the theorem below. 
\begin{theorem}\label{th:sorsum}
	If $n\leq k$, then
	\begin{equation}
		\sum_{i=0}^{n} a_i^{k}=\sum_{\ell=k-n}^{k+n} {\binomtwos{n}{\ell-k+n}} a_0^{\ell}. 
	\end{equation}
\end{theorem}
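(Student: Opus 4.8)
The plan is to prove the identity by summing the already-established base formula from Theorem~\ref{th:base} over the row index and then recognizing the inner double-indexed sum as the definition of the partial sum trinomial coefficient $\binomtwos{n}{k}$. The key observation is that $\binomtwos{n}{m}=\sum_{j=0}^{n}\binomtwo{n-j}{m-j}$, so summing trinomial coefficients down a column is precisely what converts a sum of Theorem~\ref{th:base} expressions into the bracketed partial-sum symbols.

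First I would substitute the formula from Theorem~\ref{th:base} for each term $a_i^{k}$, valid since $i\le n\le k$, obtaining
\begin{equation*}
	\sum_{i=0}^{n} a_i^{k}=\sum_{i=0}^{n}\sum_{m=k-i}^{k+i}\binomtwo{i}{m-k+i}\,a_0^{m}.
\end{equation*}
Here the inner index $m$ ranges over $[k-i,k+i]$, which for $i=n$ is the widest window $[k-n,k+n]$, so every term contributing is of the form $a_0^{\ell}$ with $\ell\in[k-n,k+n]$. The next step is to exchange the order of summation and collect, for each fixed $\ell$ in $[k-n,k+n]$, the coefficient of $a_0^{\ell}$. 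A given $\ell$ appears in the $i$-th inner sum exactly when $|\ell-k|\le i\le n$, and its coefficient there is $\binomtwo{i}{\ell-k+i}$; reindexing by the distance $d=\ell-k$ (so $-n\le d\le n$), the total coefficient of $a_0^{\ell}$ becomes $\sum_{i=|d|}^{n}\binomtwo{i}{i+d}$.

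The heart of the matter is then to identify $\sum_{i=|d|}^{n}\binomtwo{i}{i+d}$ with $\binomtwos{n}{\ell-k+n}$. Writing $d=\ell-k$, this is exactly the second displayed ``without zero elements'' formula for the partial sum symbol given in the excerpt, namely $\binomtwos{n}{k'}=\sum_{i=|n-k'|}^{n}\binomtwo{i}{i-(n-k')}$ with $k'=\ell-k+n$, so that $n-k'=k-\ell=-d$ and the summand $\binomtwo{i}{i-(n-k')}=\binomtwo{i}{i+d}$ matches term by term. Substituting this identification back produces
\begin{equation*}
	\sum_{i=0}^{n} a_i^{k}=\sum_{\ell=k-n}^{k+n}\binomtwos{n}{\ell-k+n}\,a_0^{\ell},
\end{equation*}
which is the claim.

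The main obstacle I anticipate is purely bookkeeping: getting the index shifts right so that the collected coefficient of $a_0^{\ell}$ lands on the correct partial-sum symbol, and confirming that the lower limit of the inner coefficient sum is $|d|=|\ell-k|$ rather than $0$. This hinges on the convention $\binomtwo{i}{j}=0$ for $j\notin\{0,\dots,2i\}$, which guarantees that $\binomtwo{i}{i+d}$ vanishes whenever $i<|d|$, so extending or truncating the range of $i$ is harmless and the sum legitimately starts at $i=|d|$. Once the zero-padding convention is invoked to align the ranges, matching the two formulas is immediate, so I would not expect any genuine difficulty beyond careful index tracking. An alternative, and perhaps cleaner, route would be an induction on $n$ using the column recurrence \eqref{eq:n_n1} for $\binomtwos{n}{k}$ together with the definition \eqref{eq:defsum}, but the direct substitution seems the most transparent.
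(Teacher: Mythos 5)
Your proof is correct, but it takes a genuinely different route from the paper. The paper proves the identity by induction on $n$: it adds the single new term $a_n^k$ (expanded via Theorem~\ref{th:base}) to the induction hypothesis for $\sum_{i=0}^{n-1}a_i^k$, and absorbs it using the column recurrence \eqref{eq:n_n1}, $\binomtwos{n}{k}=\binomtwos{n-1}{k-1}+\binomtwo{n}{k}$, treating the two boundary terms $\ell=k\pm n$ separately. You instead expand \emph{every} term $a_i^k$ via Theorem~\ref{th:base}, interchange the order of summation, and recognize the collected coefficient $\sum_{i=|\ell-k|}^{n}\binomtwo{i}{i+\ell-k}$ of $a_0^\ell$ as $\binomtwos{n}{\ell-k+n}$ by the paper's own ``without zero elements'' formula (equivalently, the defining column sum $\binomtwos{n}{m}=\sum_{j=0}^{n}\binomtwo{n-j}{m-j}$ after reindexing $i=n-j$), with the convention $\binomtwo{i}{j}=0$ for $j\notin\{0,\dots,2i\}$ justifying the truncation of the range --- your handling of this point is exactly right and is the only place where care is needed. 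What each approach buys: yours is shorter, avoids induction entirely, and makes the combinatorial content transparent (the partial-sum symbol is literally a column sum of trinomial coefficients, and the theorem is just Theorem~\ref{th:base} summed down a column); the paper's induction template, on the other hand, is reused verbatim for the subsequent theorem expressing $\sum_i a_i^k$ in terms of the entries of row $j$, so the author gets two results from one argument, and it sidesteps the summation-interchange bookkeeping that your route must track explicitly.
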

\begin{proof}
	We prove again by induction. The case $n=0$ is clear, and for clarity in case $n=1$, we have $\sum_{i=0}^{1} a_i^{k}=a_0^{k}+a_1^{k}=a_0^{k-1}+2a_0^{k}+a_0^{k+1}$.
	The hypothesis of induction for $n-1$ is
	$$\sum_{i=0}^{n-1} a_i^{k}=\sum_{\ell=k-n+1}^{k+n-1} {\binomtwos{n-1}{\ell-k+n+1}} a_0^{\ell}.$$
	
	Now, using \eqref{eq:n_n1}, we obtain 
	\begin{eqnarray*}
		\sum_{i=0}^{n} a_i^{k}\!&=&\sum_{i=0}^{n-1} a_i^{k}+ a_{n}^{k}\\
		&=&\sum_{\ell=k-n+1}^{k+n-1} {\binomtwos{n-1}{\ell-k+n+1}} a_0^{\ell}+ \sum_{i=k-n}^{k+n} {\binomtwo{n}{i-k+n}} a_0^{i}\\
		&=&\sum_{\ell=k-n+1}^{k+n-1} {\binomtwos{n-1}{\ell-k+n+1}} a_0^{\ell}+  \binomtwo{n}{0} a_0^{k-n} +\binomtwo{n}{2n} a_0^{k+n}+  \sum_{\ell=k-n+1}^{k+n-1} {\binomtwo{n}{\ell-k+n}} a_0^{\ell}\\
		&=& \binomtwo{n}{0} a_0^{k-n}+
		\sum_{\ell=k-n+1}^{k+n-1}\left( {\binomtwos{n-1}{\ell-k+n+1}}+\binomtwo{n}{\ell-k+n}\right) a_0^{\ell}
		+\binomtwo{n}{2n} a_0^{k+n}\\
		&=& \sum_{\ell=k-n}^{k+n} {\binomtwos{n}{\ell-k+n}} a_0^{\ell}.
	\end{eqnarray*}
\end{proof}

The same induction method yields the following theorem.
\begin{theorem} If $0\leq j\leq n$, then we have
	\begin{equation*}
		\sum_{i=0}^{k} a_i^{k}=\sum_{\ell=k-n+j}^{k+n-j} {\binomtwos{n-j}{\ell-k+n-j}} a_j^{\ell}.
	\end{equation*}
\end{theorem}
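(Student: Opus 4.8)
The plan is to prove this generalized statement by the same induction method that succeeded for Theorem~\ref{th:sorsum}, but now inducting on the "depth" parameter $n-j$ rather than on $n$ directly. The key observation is that Theorem~\ref{th:sorsum} is exactly the special case $j=0$ of the present statement: there the partial column sum starting from row $0$ is expressed in terms of the row-$0$ entries $a_0^\ell$ with partial-sum trinomial coefficients $\binomtwos{n}{\cdot}$. The general claim replaces the starting row $0$ by an arbitrary row $j$, summing from $a_j^{k}$ upward (or equivalently treating row $j$ as a new "base row") and expressing the result through the entries $a_j^{\ell}$. I would first fix $j$ and set $m=n-j$, then induct on $m$.

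First I would check the base case $m=0$ (that is, $n=j$), where the sum on the left collapses to a single term and $\binomtwos{0}{0}=1$, making the identity trivial, and I would also spell out $m=1$ for clarity just as the author did in Theorem~\ref{th:sorsum}. The crucial point enabling the induction is that row $j$ of \T\ itself satisfies the same defining recurrence \eqref{eq:defsum} as row $0$: each $a_{i}^{k}$ for $i>j$ is the sum of the three entries $a_{i-1}^{k-1}$, $a_{i-1}^{k}$, $a_{i-1}^{k+1}$ directly above it. Consequently, the analogue of Theorem~\ref{th:base} holds verbatim with row $j$ playing the role of the base row, namely $a_{j+m}^{k}=\sum_{i=k-m}^{k+m}\binomtwo{m}{i-k+m}\,a_j^{i}$; I would either invoke this by relabeling indices in Theorem~\ref{th:base} or re-derive it in one line.

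With that in hand, the inductive step is mechanically identical to the proof of Theorem~\ref{th:sorsum}. Assuming the formula for $m-1$, I would split $\sum_{i=j}^{n} a_i^{k}=\sum_{i=j}^{n-1}a_i^{k}+a_{n}^{k}$, substitute the induction hypothesis into the first sum and the relabeled Theorem~\ref{th:base} into the second, peel off the two extremal terms $\binomtwo{m}{0}a_j^{k-m}$ and $\binomtwo{m}{2m}a_j^{k+m}$ (both coefficients equal $1$), and recombine the overlapping middle range using the partial-sum recurrence \eqref{eq:n_n1}, namely $\binomtwos{m}{\ell-k+m}=\binomtwos{m-1}{\ell-k+m-1}+\binomtwo{m}{\ell-k+m}$. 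This collapses everything into $\sum_{\ell=k-m}^{k+m}\binomtwos{m}{\ell-k+m}\,a_j^{\ell}$, which is the desired right-hand side after substituting $m=n-j$.

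The main obstacle is purely bookkeeping: keeping the shifted index ranges aligned so that the middle terms of the two sums overlap correctly and the recurrence \eqref{eq:n_n1} applies cleanly to each. The condition $n\leq k$ (which must be carried through as $n-j\leq k$, i.e.\ the window $[k-m,k+m]$ stays in the valid region) guarantees no boundary wrap-around corrupts the argument. Since the structure of row $j$ is indistinguishable from that of row $0$ under the recurrence, no genuinely new idea is required beyond verifying this self-similarity, which is exactly why the author can dispatch it with the remark that "the same induction method yields the following theorem."
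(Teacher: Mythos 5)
Your proof is correct and is precisely what the paper means by ``the same induction method yields the following theorem'': you treat row $j$ as the base row of a new trinomial transform triangle (legitimate because the recurrence \eqref{eq:defsum} governs rows $j+1,j+2,\ldots$ exactly as it governs rows $1,2,\ldots$), invoke the relabeled Theorem~\ref{th:base}, and induct on the depth $m=n-j$ with the same splitting, peeling of the two extremal coefficients, and recombination via \eqref{eq:n_n1} as in the proof of Theorem~\ref{th:sorsum}. Note that in working with $\sum_{i=j}^{n}a_i^{k}$ you have tacitly corrected a typo in the printed statement, whose left-hand side $\sum_{i=0}^{k}a_i^{k}$ depends on neither $j$ nor $n$ and is false as written (for the constant sequence $a_k\equiv 1$ it would assert $\frac{3^{k+1}-1}{2}=\frac{3^{n+1}-3^{j}}{2}$, which forces $j=0$ and $n=k$), and you correctly carry along the standing hypothesis $n\leq k$ from Theorem~\ref{th:sorsum}, which the printed statement also omits.
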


Applying Theorem \ref{th:sorsum} for the  case $k=n$, we obtain the main theorem of this section. 
\begin{theorem} The column sum sequence can be given with the  expression
	\begin{equation*}
		s_{n} =\sum_{\ell=0}^{2n} {\binomtwos{n}{\ell}} a_0^{\ell}.
	\end{equation*}
\end{theorem}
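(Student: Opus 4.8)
The plan is to obtain this identity as a direct specialization of Theorem~\ref{th:sorsum}, exactly as the introductory sentence suggests. First I would recall the definition \eqref{eq:sumrowdef}, namely $s_n=\sum_{i=0}^{n}a_i^n$, and observe that this is precisely the left-hand side of Theorem~\ref{th:sorsum} in the special case $k=n$. The hypothesis of that theorem requires $n\leq k$; with $k=n$ this reads $n\leq n$, which holds trivially, so the theorem applies without any additional argument.

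Next I would substitute $k=n$ into the right-hand side of Theorem~\ref{th:sorsum} and track the index arithmetic. The summation bounds $\ell=k-n$ and $\ell=k+n$ collapse to $\ell=0$ and $\ell=2n$, while the shifted lower index $\ell-k+n$ inside the partial sum trinomial coefficient simplifies to $\ell-n+n=\ell$. Hence the right-hand side becomes
\begin{equation*}
	\sum_{\ell=0}^{2n}{\binomtwos{n}{\ell}}\,a_0^{\ell},
\end{equation*}
which is exactly the claimed expression for $s_n$. Equating the two sides gives the theorem.

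I do not expect any genuine obstacle here, since all the substantive work was already carried out in the inductive proof of Theorem~\ref{th:sorsum}; the present statement is merely its diagonal specialization. The only points that deserve a line of verification are the triviality of the hypothesis $n\leq k$ at $k=n$ and the clean simplification of both the summation limits and the internal index shift. Everything else follows immediately.
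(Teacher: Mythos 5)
Your proposal is correct and is exactly the paper's own proof: the paper obtains this theorem by applying Theorem~\ref{th:sorsum} with $k=n$, which is precisely your specialization (the bounds collapse to $0$ and $2n$ and the index $\ell-k+n$ becomes $\ell$). Your write-up just makes explicit the routine index bookkeeping that the paper leaves implicit.
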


\section{Trinomial transform triangles generated by ternary homogeneous linear recurrent sequences}

From this point on we examine the case $a_0^k = a_k$, where $(a_k)_{k=0}^{\infty}$ is a ternary linear recursive sequence with initial values $a_0, a_1,a_2\in \mathbb{Z}$ ($|a_0|+|a_1|+|a_2|\ne 0 $). And it is defined for $3\leq k$ by
\begin{equation}\label{eq:def_ternary_seq}
	a_k=\alpha a_{k-1}+\beta a_{k-2}+\gamma a_{k-3}, 
\end{equation}
where $\alpha, \beta, \gamma\in \mathbb{Z}$,\  $\gamma \ne0$. (In general, the results of this section hold for an integral domain.)

From now on, we will use three important variables
\begin{eqnarray*}
	\mathcal{A}&=&\alpha^2+\alpha+2\beta+3,\\
	\mathcal{B}&=&-2\alpha^2+\alpha\beta+2\alpha\gamma-\beta^2-2\alpha-3\beta+3\gamma-3,\\
	\mathcal{C}&=&\alpha^2-\alpha\beta-\alpha\gamma+\beta^2-\beta\gamma+\gamma^2+\alpha+\beta-2\gamma+1.	
\end{eqnarray*}

\begin{theorem}The terms in the rows satisfy the same ternary relation,
	\begin{equation}\label{eq:rec_abc}
		a_n^k=\alpha a_{n}^{k-1}+\beta a_{n}^{k-2}+\gamma a_{n}^{k-3}  \qquad (0\leq n\leq k-3).
	\end{equation}
\end{theorem}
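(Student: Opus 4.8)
The plan is to prove the identity by induction on $n$, using Theorem~\ref{th:base} to reduce everything to a statement about fixed linear combinations of the row-$0$ sequence $(a_0^i)$. The base case $n=0$ is exactly the defining recurrence \eqref{eq:def_ternary_seq} for $(a_k)$, since $a_0^k = a_k$. For the inductive step I would assume \eqref{eq:rec_abc} holds for $n-1$ and then express $a_n^{k}$, $a_n^{k-1}$, $a_n^{k-2}$, $a_n^{k-3}$ each via the construction rule \eqref{eq:defsum}, i.e.\ $a_n^{k}=a_{n-1}^{k-1}+a_{n-1}^{k}+a_{n-1}^{k+1}$, and similarly for the shifted columns.

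The key observation is that the three-term column-shift operator in \eqref{eq:defsum} and the ternary recurrence operator $a \mapsto \alpha a^{k-1}+\beta a^{k-2}+\gamma a^{k-3}$ act on different indices and therefore commute. Concretely, forming $\alpha a_{n}^{k-1}+\beta a_{n}^{k-2}+\gamma a_{n}^{k-3}$ and substituting the construction rule into each term, I would regroup the nine resulting summands of the form $a_{n-1}^{k-1+\delta}$ (for the relevant shifts $\delta$) by their second superscript. Each group collects exactly the combination $\alpha a_{n-1}^{m-1}+\beta a_{n-1}^{m-2}+\gamma a_{n-1}^{m-3}$ for the appropriate $m$, which by the induction hypothesis equals $a_{n-1}^{m}$. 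Reassembling these gives $a_{n-1}^{k-1}+a_{n-1}^{k}+a_{n-1}^{k+1}=a_n^{k}$, completing the step.

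A cleaner way to package the same argument, which I would probably prefer for the write-up, is to work directly from Theorem~\ref{th:base}: write $a_n^{k}=\sum_{i} \binomtwo{n}{i-k+n}\,a_0^{i}$ and apply the recurrence \eqref{eq:def_ternary_seq} \emph{inside} the sum to the row-$0$ terms. Since the trinomial weights $\binomtwo{n}{i-k+n}$ depend only on the offset $i-k$, shifting $k$ by $1$, $2$, or $3$ merely reindexes the sum, so $\alpha a_n^{k-1}+\beta a_n^{k-2}+\gamma a_n^{k-3}$ becomes $\sum_i \binomtwo{n}{i-k+n}\bigl(\alpha a_0^{i-1}+\beta a_0^{i-2}+\gamma a_0^{i-3}\bigr)$, and the parenthesized expression is just $a_0^{i}=a_k$ by \eqref{eq:def_ternary_seq}, valid whenever the indices stay in the recursion range.

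The main obstacle is bookkeeping at the boundaries rather than the algebra itself. The recurrence \eqref{eq:def_ternary_seq} only holds for index $\geq 3$, so I must check that the reindexing never reaches below $i=3$ in the row-$0$ sequence; the hypothesis $0\leq n\leq k-3$ in \eqref{eq:rec_abc} is precisely what guarantees the smallest relevant superscript $i-3 = (k-n)-3 \geq 0$, so every application of \eqref{eq:def_ternary_seq} lands in its valid range. I would state this range condition explicitly and verify the extreme terms of the sum, after which the identity follows immediately.
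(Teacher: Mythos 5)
Your proposal is correct, and it in fact contains two proofs. The first one---substituting the construction rule \eqref{eq:defsum} into $\alpha a_n^{k-1}+\beta a_n^{k-2}+\gamma a_n^{k-3}$, regrouping the nine summands by superscript, and applying the induction hypothesis to each group---is, up to running the computation from the right-hand side instead of the left, exactly the paper's own inductive proof; the commutation of the column-shift operator with the recurrence operator is precisely the paper's key step. Your preferred second argument is genuinely different: instead of a fresh induction, you invoke the closed form of Theorem~\ref{th:base}, observe that the weights $\binomtwo{n}{i-k+n}$ depend only on the offset $i-k$, so shifting $k$ by $1$, $2$, $3$ merely translates the summation window, and then absorb $\alpha a_0^{i-1}+\beta a_0^{i-2}+\gamma a_0^{i-3}=a_0^{i}$ termwise via \eqref{eq:def_ternary_seq}. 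What this buys: the proof is direct, and it exposes the structural reason the rows inherit the recurrence---any constant-coefficient linear recurrence satisfied by row $0$ is preserved by any operator built from translation-invariant weights. It also forces the range bookkeeping into the open: the smallest index occurring is $i=k-n$, and the hypothesis $n\leq k-3$ is exactly what keeps every application of \eqref{eq:def_ternary_seq} legitimate; the paper's induction needs the same condition (to apply the hypothesis in row $n-1$ at column $k-1$) but passes over it silently. The cost is a dependence on Theorem~\ref{th:base}, whereas the paper's argument uses only the defining relation \eqref{eq:defsum}; since Theorem~\ref{th:base} precedes this statement in the paper, either write-up is legitimate. One minor typo: in your third paragraph the parenthesized expression equals $a_0^{i}=a_i$, not $a_k$.
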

\begin{proof} We prove by induction on $n$. 
	If $n=0$, then \eqref{eq:rec_abc} is the definition of $a_k$. We suppose that it holds for up to row $n-1$. Then
	\begin{eqnarray*}
		a_n^k&=&a_{n-1}^{k-1}+a_{n-1}^{k}+a_{n-1}^{k+1}\\
		&=& \alpha a_{n-1}^{k-2}+\beta a_{n-1}^{k-3}+\gamma a_{n-1}^{k-4}
		+\alpha a_{n-1}^{k-1}+\beta a_{n-1}^{k-2}+\gamma a_{n-1}^{k-3}
		+\alpha a_{n-1}^{k}+\beta a_{n-1}^{k-1}+\gamma a_{n-1}^{k-2}\\
		&=& \alpha(a_{n-1}^{k}+a_{n-1}^{k-1}+a_{n-1}^{k-2})
		+\beta(a_{n-1}^{k-1}+a_{n-1}^{k-2}+a_{n-1}^{k-3})
		+\gamma(a_{n-1}^{k-2}+a_{n-1}^{k-3}+a_{n-1}^{k-4})\\
		&=& \alpha a_{n}^{k-1}+\beta a_{n}^{k-2}+\gamma a_{n}^{k-3}.
	\end{eqnarray*}
\end{proof}

\begin{remark}
	Let us extend the sequence $(a_k)$ for negative indices, so that let $a_k=0$, if $k<0$. From definition \eqref{eq:defsum}, the initial values of sequences of rows are $a_n^{-n}=a_0$, $a_n^{-n+1}=na_0+a_1$, and $a_n^{-n+2}=\binom{n+1}{2}a_0+na_1+a_2$ with extension for negative indices. It can be easily proved by induction.  
\end{remark}

\begin{theorem}\label{th:rec_ABC}
	The terms in the diagonals can be described by the same ternary recurrence relation,
	\begin{equation}\label{eq:rec_ABC}
		a_n^{n+\ell}=\mathcal{A}a_{n-1}^{n+\ell-1}+\mathcal{B} a_{n-2}^{n+\ell-2}+\mathcal{C} a_{n-3}^{n+\ell-3} \qquad (3\leq n,\  0\leq\ell).
	\end{equation}
\end{theorem}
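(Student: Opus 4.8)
The plan is to pass from the diagonal index to the underlying sequence $(a_k)$ and read off the recurrence from its characteristic roots. First I would specialize Theorem~\ref{th:base} to $k=n+\ell$, which after the substitution $j=i-\ell$ gives
\[
a_n^{n+\ell}=\sum_{i=\ell}^{2n+\ell}\binomtwo{n}{i-\ell}a_0^{i}=\sum_{j=0}^{2n}\binomtwo{n}{j}\,a_{\ell+j}.
\]
Writing $(a_k)$ through its Binet form $a_k=\sum_{i=1}^{3}C_i r_i^{\,k}$, where $r_1,r_2,r_3$ are the roots of the characteristic polynomial $f(x)=x^3-\alpha x^2-\beta x-\gamma$ of \eqref{eq:def_ternary_seq}, and using the generating identity $\sum_{j=0}^{2n}\binomtwo{n}{j}x^{j}=(1+x+x^2)^n$ (the defining property of the trinomial coefficients), I obtain
\[
a_n^{n+\ell}=\sum_{i=1}^{3}C_i r_i^{\,\ell}\sum_{j=0}^{2n}\binomtwo{n}{j}r_i^{\,j}=\sum_{i=1}^{3}\bigl(C_i r_i^{\,\ell}\bigr)\,R_i^{\,n},\qquad R_i:=1+r_i+r_i^{2}.
\]

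Thus, for fixed $\ell$, the diagonal sequence $(a_n^{n+\ell})_n$ is a linear combination of the geometric sequences $R_1^{\,n},R_2^{\,n},R_3^{\,n}$, hence it is annihilated by the shift operator $\prod_{i=1}^{3}(E-R_i)$ and so satisfies a ternary recurrence whose characteristic polynomial is $\prod_{i=1}^{3}(y-R_i)=y^3-e_1y^2+e_2y-e_3$. Comparing this with the recurrence $a_n^{n+\ell}=\mathcal{A}a_{n-1}^{n+\ell-1}+\mathcal{B}a_{n-2}^{n+\ell-2}+\mathcal{C}a_{n-3}^{n+\ell-3}$, whose characteristic polynomial is $y^3-\mathcal{A}y^2-\mathcal{B}y-\mathcal{C}$, reduces the whole theorem to the symmetric-function identities $\mathcal{A}=e_1(R)$, $\mathcal{B}=-e_2(R)$ and $\mathcal{C}=e_3(R)$, where $R=(R_1,R_2,R_3)$.

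These three identities I would verify by expressing the elementary symmetric functions of the $R_i$ through those of the $r_i$, namely $r_1+r_2+r_3=\alpha$, $\sum_{i<j}r_ir_j=-\beta$, $r_1r_2r_3=\gamma$. For $\mathcal{A}=\sum_i(1+r_i+r_i^2)=3+p_1+p_2$ the power sums $p_1=\alpha$, $p_2=\alpha^2+2\beta$ give $\mathcal{A}=\alpha^2+\alpha+2\beta+3$ at once; for $\mathcal{B}=-e_2(R)=-\tfrac12\bigl((\textstyle\sum_iR_i)^2-\sum_iR_i^2\bigr)$ I would compute $\sum_iR_i^2$ from $p_3,p_4$ via Newton's identities. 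The cleanest step is $\mathcal{C}=\prod_i(1+r_i+r_i^2)$: factoring $1+x+x^2=(x-\omega)(x-\omega^2)$ with $\omega$ a primitive cube root of unity yields $\prod_i(1+r_i+r_i^2)=f(\omega)f(\omega^2)$, and since $\omega^3=1$ this collapses, after using $\omega+\omega^2=-1$, to exactly the stated polynomial $\mathcal{C}$. Each of these is a routine symmetric-function computation that I would not expand here.

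The one genuine obstacle is that the Binet form is valid only when $f$ has three distinct roots. I would remove this restriction by a specialization argument: by Theorem~\ref{th:base} together with \eqref{eq:def_ternary_seq}, both sides of \eqref{eq:rec_ABC} are polynomials in $\alpha,\beta,\gamma,a_0,a_1,a_2$, so an identity that holds on the Zariski-dense set where the discriminant of $f$ is nonzero holds identically. Alternatively, one can give a root-free proof entirely inside \T: iterate the defining rule \eqref{eq:defsum} three, two and one times to bring $a_n^{n+\ell}$, $a_{n-1}^{n+\ell-1}$ and $a_{n-2}^{n+\ell-2}$ down to row $n-3$ (introducing the trinomial coefficients of rows $3,2,1$), then use the horizontal recurrence \eqref{eq:rec_abc}---legitimate since $\ell\ge0$ keeps every column index at least the row index $n-3$---to rewrite each entry in terms of the three consecutive values $a_{n-3}^{n+\ell-3},a_{n-3}^{n+\ell-2},a_{n-3}^{n+\ell-1}$, and finally match the three coefficients; this recovers the same identities for $\mathcal{A},\mathcal{B},\mathcal{C}$ without invoking roots.
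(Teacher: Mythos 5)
Your proposal is correct, but it takes a genuinely different route from the paper's. The paper proves \eqref{eq:rec_ABC} by a double induction: first on $\ell$ with $n=3$ fixed (the base cases are verified by computer, since the explicit expressions are already very large), then on $n$, both steps exploiting that the row recurrence with coefficients $\alpha,\beta,\gamma$ and the diagonal recurrence with coefficients $\mathcal{A},\mathcal{B},\mathcal{C}$ can be pushed through the construction rule \eqref{eq:defsum} and interchanged. You instead combine Theorem~\ref{th:base} with the generating identity $\sum_{j}\binomtwo{n}{j}x^j=(1+x+x^2)^n$ and the Binet form to recognize each diagonal sequence as a linear combination of $R_i^{\,n}$ with $R_i=r_i^2+r_i+1$, which reduces the theorem to the symmetric-function identities $\mathcal{A}=e_1(R)$, $\mathcal{B}=-e_2(R)$, $\mathcal{C}=e_3(R)$; these do check out (for instance, writing $f(x)=x^3-\alpha x^2-\beta x-\gamma$ and $\omega$ for a primitive cube root of unity, $e_3(R)=f(\omega)f(\omega^2)$ expands to exactly the stated $\mathcal{C}$, and Newton's identities give $\mathcal{A}$ and $\mathcal{B}$), and your Zariski-density specialization correctly patches the repeated-root case, since for each fixed $(n,\ell)$ both sides of \eqref{eq:rec_ABC} are polynomials in $\alpha,\beta,\gamma,a_0,a_1,a_2$ with integer coefficients. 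This is essentially the idea the paper itself uses later for its general theorem (following Barbero et al.), where the transform of a degree-$r$ recurrence with distinct roots $\omega_i$ has characteristic roots $\omega_i^2+\omega_i+1$; your version strengthens that in two respects: it extracts the explicit coefficients $\mathcal{A},\mathcal{B},\mathcal{C}$, and the specialization argument removes the distinct-root hypothesis, turning \eqref{eq:rec_ABC} into a polynomial identity valid over any commutative ring, hence in particular over the integral domains the paper allows. What the paper's induction buys in exchange is complete elementarity --- no complex roots, no density argument, everything inside the triangle --- at the price of computer-checked base cases and no conceptual explanation of where $\mathcal{A},\mathcal{B},\mathcal{C}$ come from. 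Your closing root-free sketch (reduce all terms to row $n-3$ via the analogue of Theorem~\ref{th:base}, then use \eqref{eq:rec_abc} to rewrite everything in three consecutive entries and match coefficients, legitimate precisely because $\ell\ge 0$ keeps the needed column indices at least $n$) is also sound, and is in effect a non-inductive, uniform-in-$(n,\ell)$ version of the paper's base-case computation.
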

\begin{proof} We prove it by induction first on $\ell$ and second on $n$.
	
	First, let $n=3$. In case $\ell=0, 1, 3$,  we can check the recurrence \eqref{eq:rec_ABC} by computer. (For example the expression of $a_3^3$ contains more than hundred characters. Figure~\ref{fig:steps_trinomial} shows a small part of the triangle based on initial elements $x$, $y$ and $z$.) 
	Now, we suppose that \eqref{eq:rec_ABC} holds for up to $\ell-1$. We obtain
	\begin{eqnarray*}
		a_3^{6+\ell}&=&\alpha a_3^{5+\ell}+\beta a_3^{4+\ell}+\gamma a_3^{3+\ell}\\
		&=&	\alpha\left(\mathcal{A}a_{2}^{4+\ell}+\mathcal{B} a_{1}^{3+\ell}+\mathcal{C} a_{0}^{2+\ell} \right) +
		\beta\left(\mathcal{A}a_{2}^{3+\ell}+\mathcal{B} a_{1}^{2+\ell}+\mathcal{C} a_{0}^{1+\ell} \right) \\&& \qquad +\ 
		\gamma\left(\mathcal{A}a_{2}^{2+\ell}+\mathcal{B} a_{1}^{1+\ell}+\mathcal{C} a_{0}^{\ell} \right)\\
		&=&	\mathcal{A}\left(\alpha a_{2}^{4+\ell }+ \beta a_{2}^{3+\ell} + \gamma a_{2}^{2+\ell} \right) +
		\mathcal{B}\left(\alpha a_{1}^{3+\ell }+ \beta a_{1}^{2+\ell} + \gamma a_{1}^{1+\ell} \right) \\&& \qquad +\ 
		\mathcal{C}\left(\alpha a_{0}^{2+\ell }+ \beta a_{0}^{1+\ell} + \gamma a_{0}^{\ell} \right) \\
		&=&  \mathcal{A}a_2^{5+\ell}+\mathcal{B}a_1^{4+\ell}+\mathcal{C}a_0^{3+\ell}.
	\end{eqnarray*}

	Second, we suppose in case any $\ell$ that \eqref{eq:rec_ABC} holds for up to $4\leq n-1$, thus we have
	\begin{eqnarray*}
		a_n^{n+\ell}&=&a_{n-1}^{n+\ell-1}+a_{n-1}^{n+\ell}+a_{n-1}^{n+\ell+1}\\
		&=&\mathcal{A}a_{n-2}^{n+\ell-2}+\mathcal{B} a_{n-3}^{n+\ell-3}+\mathcal{C} a_{n-4}^{n+\ell-4}+
		\mathcal{A}a_{n-2}^{n+\ell-1}+\mathcal{B} a_{n-3}^{n+\ell-2}+\mathcal{C} a_{n-4}^{n+\ell-3}\\
		&& \qquad +\ 	\mathcal{A}a_{n-2}^{n+\ell}+\mathcal{B} a_{n-3}^{n+\ell-1}+\mathcal{C} a_{n-4}^{n+\ell-2}\\
		&=&\mathcal{A}\left(a_{n-2}^{n+\ell-2}+ a_{n-2}^{n+\ell-1}+a_{n-2}^{n+\ell}\right)+
		\mathcal{B}\left(a_{n-3}^{n+\ell-3}+a_{n-3}^{n+\ell-2}+a_{n-3}^{n+\ell-1}\right)\\ 
		&& \qquad +\ 
		\mathcal{C}\left(a_{n-4}^{n+\ell-4}+ a_{n-4}^{n+\ell-3}+ a_{n-4}^{n+\ell-2} \right)\\
		&=&\mathcal{A}a_{n-1}^{n+\ell-1}+\mathcal{B}a_{n-2}^{n+\ell-2}+\mathcal{C} a_{n-3}^{n+\ell-3}.
	\end{eqnarray*}
\end{proof}

\begin{figure}[h!] \centering
	\scalebox{0.99}{ \begin{tikzpicture}[->,xscale=3.0,yscale=2.5, auto,swap,every node/.style={shape=rectangle,draw=none}]
		\node(a0) at (-2,-0)   {$x$};
		\node (a1) at (-1,0)   {$y$};
		\node (a2) at (0,0)   {$z$};
		\node (a3) at (1,0)   {$ \gamma x+\beta y+\alpha z $};
		\node (a4) at (2,0) [align=center]{$ \alpha\gamma x+$\\$(\alpha\beta+\gamma) y+$\\$(\alpha^2+\beta) z $};
		\node (b1) at (-1,-1)   {$x+y+z$};
		\node (b2) at (0,-1)  [ align=center]{$ \gamma x+(\beta+1)y$\\$+(\alpha+1)z$};
		\node (b3) at (1,-1) [ align=center]   {$(\alpha\gamma+\gamma)x+$\\$(\alpha\beta+\beta+\gamma)y+$\\$(\alpha^2+\alpha+\beta+1)z$};
		\node (c2) at (0,-2)  [align=center] {$(\alpha\gamma+2\gamma+1)x+$\\$(\alpha\beta+2\beta+\gamma+2)y+$\\$(\alpha^2+2\alpha+\beta+3)z$};
		\path (a0) edge node[draw=none] {} (b1) [thick];
		\path (a1) edge node[draw=none] {} (b1) [thick];
		\path (a1) edge node[draw=none] {} (b2) [thick];
		\path (a2) edge node[draw=none] {} (b1) [thick];
		\path (a2) edge node[draw=none] {} (b2) [thick];
		\path (a2) edge node[draw=none] {} (b3) [thick];
		\path (a3) edge node[draw=none] {} (b2) [thick];
		\path (a3) edge node[draw=none] {} (b3) [thick];
		\path (a4) edge node[draw=none] {} (b3) [thick];
		\path (b1) edge node[draw=none] {} (c2)[thick];
		\path (b2) edge node[draw=none] {} (c2)[thick];
		\path (b3) edge node[draw=none] {} (c2)[thick];	
		\end{tikzpicture}}
	\caption{A part of the growing of \T}
	\label{fig:steps_trinomial}
\end{figure}
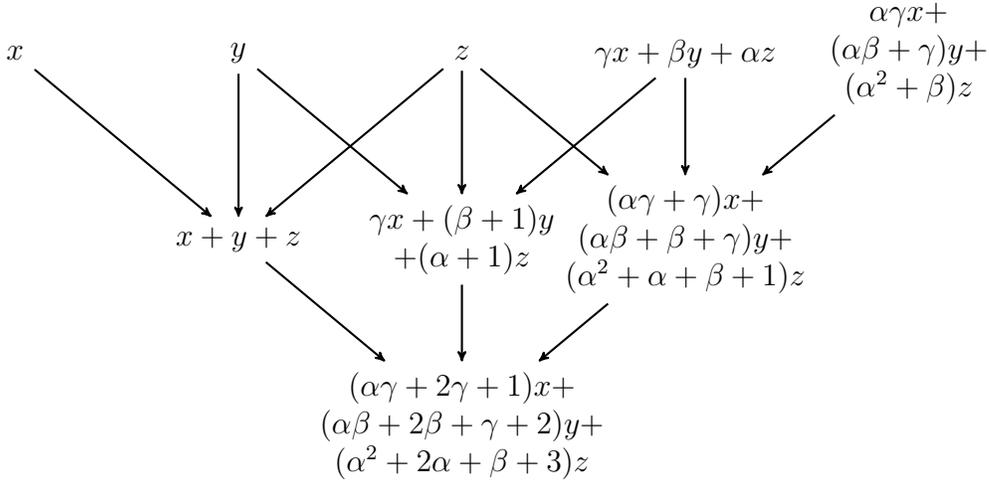

Theorem \ref{th:rec_ABC} for case $\ell=0$ and Figure \ref{fig:steps_trinomial} provide the main statement of this section.
\begin{corollary}[Main corollary]\label{th:trasform_of_ternary}
	The trinomial transform sequence of a given ternary linear recurrent sequence $(a_k)$ defined by \eqref{eq:def_ternary_seq} is the ternary linear recurrent sequence $(b_k)$ defined by 
	\begin{equation*}
		b_n=\mathcal{A}b_{n-1}+\mathcal{B} b_{n-2}+\mathcal{C} b_{n-3}  \qquad (3\leq n,\  0\leq\ell),
	\end{equation*}
	with initial values $b_0=a_0$, $b_1=a_0+a_1+a_2$ and $b_2=(\alpha\gamma+2\gamma+1)a_0+(\alpha\beta+2\beta+\gamma+2)a_1+(\alpha^2+2\alpha+\beta+3)a_2$.
\end{corollary}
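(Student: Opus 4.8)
The plan is to recognize that this \emph{Main corollary} is an almost immediate consequence of Theorem~\ref{th:rec_ABC}, since the trinomial transform sequence $(b_n)$ is exactly the main diagonal sequence $(a_n^n)$, i.e.\ the case $\ell=0$ of the diagonal sequences. First I would invoke the corollary following Theorem~\ref{th:base}, which already identifies $(b_n)=(a_n^n)$ as the trinomial transform of $(a_k)$. Then the recurrence $b_n=\mathcal{A}b_{n-1}+\mathcal{B}b_{n-2}+\mathcal{C}b_{n-3}$ for $n\geq 3$ is literally \eqref{eq:rec_ABC} specialized to $\ell=0$, reading $a_n^n=\mathcal{A}a_{n-1}^{n-1}+\mathcal{B}a_{n-2}^{n-2}+\mathcal{C}a_{n-3}^{n-3}$. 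So the recurrence part of the statement requires no new work beyond citing Theorem~\ref{th:rec_ABC}.

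The remaining task is to verify the three initial values $b_0,b_1,b_2$. For these I would read them directly off the explicit low-order terms of \T. The value $b_0=a_0^0=a_0$ is immediate from the definition $a_0^k=a_k$. For $b_1=a_1^1$, the defining relation \eqref{eq:defsum} gives $a_1^1=a_0^0+a_0^1+a_0^2=a_0+a_1+a_2$, matching the claimed value. For $b_2=a_2^2$, I would use the bottom entry of Figure~\ref{fig:steps_trinomial} (the term $c2$), which with generic seeds $x=a_0$, $y=a_1$, $z=a_2$ is exactly $(\alpha\gamma+2\gamma+1)a_0+(\alpha\beta+2\beta+\gamma+2)a_1+(\alpha^2+2\alpha+\beta+3)a_2$. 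Alternatively, one computes $a_2^2$ from scratch: $a_2^2=a_1^1+a_1^2+a_1^3$, where each $a_1^k=a_0^{k-1}+a_0^k+a_0^{k+1}$, then substitutes the recurrence \eqref{eq:def_ternary_seq} to re-express the resulting terms $a_3,a_4,a_5$ in terms of $a_0,a_1,a_2$ and collects coefficients.

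The main obstacle, such as it is, is purely bookkeeping: confirming that the $b_2$ coefficients obtained by the explicit expansion agree with those in Figure~\ref{fig:steps_trinomial}. This is a finite polynomial computation in $\alpha,\beta,\gamma$ and carries no conceptual difficulty; I would present it as a short direct check or simply point to the figure. One subtlety worth flagging is that Theorem~\ref{th:rec_ABC} was verified at its base case $n=3$ ``by computer,'' so the initial values $b_0,b_1,b_2$ genuinely serve as the seeds that, together with the recurrence, reproduce $b_3=a_3^3$ and beyond; thus the corollary is complete precisely because the recurrence is anchored by these three computed initial terms. I would close by noting that no independent verification of the recurrence is needed here, as it is inherited wholesale from Theorem~\ref{th:rec_ABC}.
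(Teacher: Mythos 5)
Your proposal is correct and follows essentially the same route as the paper: the paper's own justification is precisely that Theorem~\ref{th:rec_ABC} with $\ell=0$ gives the recurrence for the diagonal $(b_n)=(a_n^n)$, while Figure~\ref{fig:steps_trinomial} supplies the initial values $b_0=a_0$, $b_1=a_0+a_1+a_2$, and $b_2=(\alpha\gamma+2\gamma+1)a_0+(\alpha\beta+2\beta+\gamma+2)a_1+(\alpha^2+2\alpha+\beta+3)a_2$. Your explicit expansion of $a_2^2$ and your remark that the three seeds anchor the computer-verified base case of the recurrence only make the argument more self-contained than the paper's one-line justification.
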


Moreover, we can give a general statement for the linear homogenous recurrence sequences according to Barbero~et~al.\ \cite{Barbero1}.

\begin{theorem}
	Let $(a_k)$  be a linear recurrent sequence of degree $r$ with characteristic polynomial given by $p(t)$, and zeros $\omega_1,\omega_2,\ldots,\omega_r$ are all different. Then its trinomial transform sequence $(b_k)$  is a linear recurrent sequence of degree $r$, and zeros of the characteristic polynomial of $(b_k)$ are $\omega_1^2+\omega_1+1$, $\omega_2^2+\omega_2+1$, \ldots, $\omega_r^2+\omega_r+1$.
	\begin{proof}
		Our proof is similar to the proof of Barbero~et~al.\  \cite[Thm.\ 10]{Barbero1}. If $p(t)$ has distinct complex zeros $\omega_1,\ldots,\omega_r$, then by Binet formula
		$$ a_n=\sum_{j=1}^{r}A_j\omega_{j}^{n} , $$
		for some complex $A_i$ derived from initial conditions \cite[Thm.\ C.1.\ p.\ 33]{sho}. For all terms of $(b_k)$, we have 
		$$ b_n=\sum_{i=0}^{2n}\binomtwo{n}{i} a_i
		=\sum_{i=0}^{2n}\binomtwo{n}{i} \sum_{j=1}^{r}A_j\omega_{j}^{i}
		= \sum_{j=1}^{r}A_j(\omega_j^2+\omega_j+1)^n\,. $$
	\end{proof}
\end{theorem}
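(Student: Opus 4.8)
The plan is to follow the Binet-formula route already sketched for the diagonal transform, exploiting the fact that the trinomial transform sends each geometric term $\omega^n$ to $(\omega^2+\omega+1)^n$. First I would recall that since the characteristic polynomial $p(t)$ has $r$ distinct zeros $\omega_1,\ldots,\omega_r$, the sequence $(a_k)$ admits a Binet representation $a_n=\sum_{j=1}^r A_j\omega_j^n$ for suitable constants $A_j$ determined by the initial conditions. The crucial computational kernel is the identity
\begin{equation*}
	\sum_{i=0}^{2n}\binomtwo{n}{i}\,\omega^{i}=(\omega^2+\omega+1)^n,
\end{equation*}
which is nothing but the defining generating-function property of the trinomial coefficients, namely that $\binomtwo{n}{i}$ is the coefficient of $x^i$ in $(1+x+x^2)^n$; evaluating that expansion at $x=\omega$ gives exactly the right-hand side. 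I would state this identity explicitly as the engine of the proof.

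Next I would substitute the Binet expression into the transform and interchange the two finite sums, which is legitimate since both ranges are finite. This yields
\begin{equation*}
	b_n=\sum_{i=0}^{2n}\binomtwo{n}{i}\sum_{j=1}^{r}A_j\omega_j^{i}
	=\sum_{j=1}^{r}A_j\sum_{i=0}^{2n}\binomtwo{n}{i}\omega_j^{i}
	=\sum_{j=1}^{r}A_j\bigl(\omega_j^2+\omega_j+1\bigr)^n.
\end{equation*}
This last expression is precisely a Binet-type representation for $(b_n)$ with bases $\mu_j:=\omega_j^2+\omega_j+1$ and the same coefficients $A_j$. From it one reads off that $(b_n)$ is a linear recurrent sequence whose characteristic roots are the $\mu_j$, and whose order is at most $r$.

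To conclude that the order is exactly $r$ (so that the characteristic polynomial really is $\prod_{j=1}^r(t-\mu_j)$ with the stated zeros), I would verify that the $\mu_j$ are pairwise distinct. This is the one place requiring a genuine argument beyond formal manipulation: the map $\omega\mapsto\omega^2+\omega+1$ is two-to-one, so distinctness of the $\omega_j$ does not automatically give distinctness of the $\mu_j$. I expect this to be the main obstacle. However, I would note that $\mu_i=\mu_j$ forces $\omega_i^2+\omega_i=\omega_j^2+\omega_j$, i.e. $(\omega_i-\omega_j)(\omega_i+\omega_j+1)=0$, so with $\omega_i\ne\omega_j$ one gets the collision only when $\omega_i+\omega_j=-1$. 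Since the theorem as stated asserts that the zeros of the transform are the $r$ values $\omega_j^2+\omega_j+1$, I would take the hypothesis to supply genericity (or simply phrase the conclusion as a list of the $r$ characteristic roots, counted with the multiplicities induced by any such coincidence), matching the corresponding treatment in Barbero et al.\ \cite{Barbero1}. The remaining bookkeeping—that $(b_n)$ satisfies the recurrence with characteristic polynomial $\prod_j(t-\mu_j)$—is then immediate from the Binet form, since any sequence expressible as $\sum_j A_j\mu_j^n$ with distinct $\mu_j$ satisfies exactly that recurrence.
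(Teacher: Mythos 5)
Your proof is correct and follows essentially the same route as the paper: expand $(a_n)$ by the Binet formula, interchange the two finite sums, and use the generating-function identity $\sum_{i=0}^{2n}\binomtwo{n}{i}\omega^{i}=(\omega^2+\omega+1)^n$ to read off a Binet-type representation of $(b_n)$. You in fact go one step further than the paper, whose proof stops at the formal computation: you correctly observe that the map $\omega\mapsto\omega^2+\omega+1$ can identify two distinct roots precisely when $\omega_i+\omega_j=-1$, so the transformed sequence has degree exactly $r$ with $r$ distinct characteristic roots only under that genericity proviso---a subtlety the paper's own statement and proof leave unaddressed.
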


It is surprising that a ternary recurrence relation with rational coefficients holds for the finite column sequences $(a_{i}^{k})_{i=0}^k$. The following theorem formulates it precisely.
\begin{theorem}\label{th:column}
	The terms in the column $k$ can be described by the ternary recurrence relation
	\begin{equation*}
		a_n^{k}=\frac{\mathcal{P}}{\gamma}a_{n-1}^{k}+\frac{\mathcal{Q}}{\gamma} a_{n-2}^{k}+\frac{\mathcal{C}}{\gamma} a_{n-3}^{k} \qquad (3\leq n\leq k),
	\end{equation*}
	where $\mathcal{P}= \alpha\gamma-\beta+3\gamma$ and 
	$\mathcal{Q}= \alpha\beta-2\alpha\gamma+\beta\gamma-\alpha+2\beta$.
\end{theorem}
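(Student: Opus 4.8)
The plan is to establish the claim by induction on $n$, mirroring the method used in the proof of Theorem~\ref{th:rec_ABC} but now working \emph{within} a fixed column $k$ rather than along a diagonal. Recall that the defining relation \eqref{eq:defsum} expresses any term as the sum of the three terms in the previous row, namely $a_n^{k}=a_{n-1}^{k-1}+a_{n-1}^{k}+a_{n-1}^{k+1}$, and that the horizontal relation \eqref{eq:rec_abc} lets me rewrite any term using its three left-neighbours in the same row, $a_m^{j}=\alpha a_{m}^{j-1}+\beta a_{m}^{j-2}+\gamma a_{m}^{j-3}$. The key structural fact I want to exploit is that these two relations together constrain each column sequence $(a_i^k)_{i=0}^k$ so tightly that it must satisfy a \emph{third}, purely vertical, ternary recurrence whose coefficients are rational functions of $\alpha,\beta,\gamma$.

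First I would set up the base case. For a fixed column $k$ (with $k\geq 3$ so that $n=3$ is admissible), I would verify directly that $\gamma\,a_3^{k}=\mathcal{P}\,a_2^{k}+\mathcal{Q}\,a_1^{k}+\mathcal{C}\,a_0^{k}$. Exactly as in Theorem~\ref{th:rec_ABC}, where the authors checked the $n=3$ diagonal base cases ``by computer,'' this is a finite symbolic identity: each of $a_0^k,a_1^k,a_2^k,a_3^k$ can be written out via Theorem~\ref{th:base} as an explicit combination of $a_0^{k-3},\dots,a_0^{k+3}$ with trinomial-coefficient weights, and then collapsed using \eqref{eq:def_ternary_seq} into a combination of $a_{k-2},a_{k-1},a_k$. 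Comparing coefficients of these three independent initial-type quantities yields three scalar identities in $\alpha,\beta,\gamma$ that pin down $\mathcal{P}=\alpha\gamma-\beta+3\gamma$ and $\mathcal{Q}=\alpha\beta-2\alpha\gamma+\beta\gamma-\alpha+2\beta$ (together with $\mathcal{C}$ as already defined); this is precisely how one \emph{discovers} the correct coefficients, and is the natural place to cross-check against Figure~\ref{fig:steps_trinomial}.

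For the inductive step, I would assume the relation holds for the three rows $n-1,n-2,n-3$ in \emph{every} admissible column, and then expand $a_n^{k}=a_{n-1}^{k-1}+a_{n-1}^{k}+a_{n-1}^{k+1}$. Applying the induction hypothesis to each of the three summands gives
\begin{equation*}
\gamma\,a_n^{k}=\mathcal{P}\bigl(a_{n-1}^{k-1}+a_{n-1}^{k}+a_{n-1}^{k+1}\bigr)+\mathcal{Q}\bigl(a_{n-2}^{k-1}+a_{n-2}^{k}+a_{n-2}^{k+1}\bigr)+\mathcal{C}\bigl(a_{n-3}^{k-1}+a_{n-3}^{k}+a_{n-3}^{k+1}\bigr),
\end{equation*}
and each parenthesised triple is, by \eqref{eq:defsum}, exactly $a_{n}^{k}$, $a_{n-1}^{k}$, $a_{n-2}^{k}$ respectively—wait, I must instead recognise each triple $a_{m}^{k-1}+a_{m}^{k}+a_{m}^{k+1}$ as the term $a_{m+1}^{k}$ one row lower, which telescopes the expression back into $\mathcal{P}a_{n-1}^{k}+\mathcal{Q}a_{n-2}^{k}+\mathcal{C}a_{n-3}^{k}$ after reindexing, closing the induction. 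The arithmetic here is routine once the regrouping is seen correctly.

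The main obstacle I anticipate is \emph{not} the induction bookkeeping but the base-case verification: producing the three coefficient identities that force $\mathcal{P}$ and $\mathcal{Q}$ into their stated closed forms. Because $a_3^k$ expands, via Theorem~\ref{th:base} and the ternary recurrence, into a genuinely large polynomial expression in $\alpha,\beta,\gamma$ (the authors themselves note it runs to ``more than hundred characters''), the risk is purely one of algebraic error rather than conceptual difficulty. I would therefore organise the base case by tracking the three coefficients of $a_{k-2},a_{k-1},a_k$ separately and treat the resulting identities as a $3\times 3$ linear system determining $(\mathcal{P},\mathcal{Q},\mathcal{C})$ given that $\gamma\neq 0$, which both guarantees that the rational coefficients $\mathcal{P}/\gamma,\mathcal{Q}/\gamma,\mathcal{C}/\gamma$ are well defined and explains why division by $\gamma$ is unavoidable.
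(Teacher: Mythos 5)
Your strategy coincides with the paper's: a two-stage induction in which row $n=3$ is settled by direct computation and the passage from row $n-1$ to row $n$ is made by applying the induction hypothesis at the three columns $k-1$, $k$, $k+1$ and regrouping via \eqref{eq:defsum}. The one real difference is in the base stage: the paper verifies $\gamma a_3^{k}=\mathcal{P}a_{2}^{k}+\mathcal{Q}a_{1}^{k}+\mathcal{C}a_{0}^{k}$ by computer only for $k=3,4,5$ and then extends to all $k$ by a separate induction on $k$ using the row recurrence \eqref{eq:rec_abc}, whereas you perform a single symbolic verification valid for every $k\geq 3$, expanding $a_0^k,\dots,a_3^k$ via Theorem~\ref{th:base} and collapsing onto $a_{k-2},a_{k-1},a_k$. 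Your variant is legitimate (the coefficients in Theorem~\ref{th:base} depend only on the offset $i-k+n$, not on $k$, and the backward collapse of $a_{k-3}$ uses $\gamma\neq 0$), and it buys you the elimination of the paper's first induction at the cost of one larger symbolic computation.

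However, the displayed equation in your inductive step is false as written: every row index inside the parentheses is one too high. Applying the hypothesis $\gamma a_{n-1}^{j}=\mathcal{P}a_{n-2}^{j}+\mathcal{Q}a_{n-3}^{j}+\mathcal{C}a_{n-4}^{j}$ at $j=k-1,k,k+1$ to $\gamma a_n^{k}=\gamma a_{n-1}^{k-1}+\gamma a_{n-1}^{k}+\gamma a_{n-1}^{k+1}$ gives
\begin{equation*}
\gamma a_n^{k}=\mathcal{P}\bigl(a_{n-2}^{k-1}+a_{n-2}^{k}+a_{n-2}^{k+1}\bigr)+\mathcal{Q}\bigl(a_{n-3}^{k-1}+a_{n-3}^{k}+a_{n-3}^{k+1}\bigr)+\mathcal{C}\bigl(a_{n-4}^{k-1}+a_{n-4}^{k}+a_{n-4}^{k+1}\bigr),
\end{equation*}
and since $a_{m}^{k-1}+a_{m}^{k}+a_{m}^{k+1}=a_{m+1}^{k}$ by \eqref{eq:defsum}, the right-hand side is $\mathcal{P}a_{n-1}^{k}+\mathcal{Q}a_{n-2}^{k}+\mathcal{C}a_{n-3}^{k}$, as required. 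Your display instead has rows $n-1,n-2,n-3$ inside the parentheses, so the same regrouping turns it into $\gamma a_n^{k}=\mathcal{P}a_{n}^{k}+\mathcal{Q}a_{n-1}^{k}+\mathcal{C}a_{n-2}^{k}$, which is not the claimed recurrence and is generally false; the conclusion you then state does not follow from your own display, and your mid-sentence correction fixes the regrouping rule but not the indices. Once the indices are repaired, the step is exactly the paper's. Note also that this step needs $n\geq 4$ (so that row $n-4\geq 0$ exists) and $n\leq k$ (so that the hypothesis applies in column $k-1$), which is precisely why $n=3$ must serve as the base row.
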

\begin{proof}
	The proof is very similar to that of Theorem \ref{th:rec_ABC}. We use the induction method. 
	
	First, let $n=3$. For cases $k=3, 4, 5$ we can check the equation 
	\begin{equation}\label{eq:g3_abc}
		{\gamma} a_3^{k}=\mathcal{P}a_{2}^{k}+\mathcal{Q} a_{1}^{k}+\mathcal{C} a_{0}^{k}.
	\end{equation}
	Now, we suppose that the equation \eqref{eq:g3_abc} holds up to $k-1$. So the condition of induction yields the equations  
	\begin{eqnarray*}
		\alpha(	{\gamma} a_3^{k-1})&=& \alpha(\mathcal{P}a_{2}^{k-1}+\mathcal{Q} a_{1}^{k-1}+\mathcal{C} a_{0}^{k-1}),\\
		\beta(	{\gamma} a_3^{k-2})&=&\beta(\mathcal{P}a_{2}^{k-2}+\mathcal{Q} a_{1}^{k-2}+\mathcal{C} a_{0}^{k-2}),\\
		\gamma(	{\gamma} a_3^{k-3})&=&\gamma(	\mathcal{P}a_{2}^{k-3}+\mathcal{Q} a_{1}^{k-3}+\mathcal{C} a_{0}^{k-3}).
	\end{eqnarray*}
	Summing the equations and applying Theorem~\ref{eq:rec_abc} we obtain that \eqref{eq:g3_abc} holds for any $k$.
	
	Second, if we suppose that 
	\begin{equation}\label{eq:gk_abc}
		{\gamma} a_{n-1}^{k}=\mathcal{P}a_{n-2}^{k}+\mathcal{Q} a_{n-3}^{k}+\mathcal{C} a_{n-4}^{k}
	\end{equation} 
	holds for any $4\leq n\leq k$, then  we have
	\begin{eqnarray*}
		{\gamma} a_{n-1}^{k-1}&=& \mathcal{P}a_{n-2}^{k-1}+ \mathcal{Q} a_{n-3}^{k-1}+ \mathcal{C} a_{n-4}^{k-1},\\
		{\gamma} a_{n-1}^{k}&=& \mathcal{P}a_{n-2}^{k}+ \mathcal{Q} a_{n-3}^{k}+ \mathcal{C} a_{n-4}^{k},\\
		{\gamma} a_{n-1}^{k+1}&=& \mathcal{P}a_{n+1}^{k-3}+ \mathcal{Q} a_{n+1}^{k-3}+ \mathcal{C} a_{n-4}^{k+1}.
	\end{eqnarray*} 
	Using  \eqref{eq:defsum} for the sum of the equations  we obtain
	$${\gamma} a_{n}^{k}=\mathcal{P}a_{n-1}^{k}+\mathcal{Q} a_{n-2}^{k}+\mathcal{C} a_{n-3}^{k}.$$
\end{proof}

\subsection{Sums and alternating sums of columns}

We give $(s_n)$ as a linear recurrent sequence with coefficients given by the coefficients of sequences $(a_n)$ and $(b_n)$. Recall  $(s_n)$ was defined in \eqref{eq:sumrowdef}.

\begin{theorem}
	If $n\geq 6$, then the sequence $(s_n)$ can be described by the sixth order  homogeneous linear recurrence relation 
	\begin{multline}\label{eq:sn}
		s_{n} = (\alpha+\mathcal{A})s_{n-1}
		+(\mathcal{B}-\alpha \mathcal{A}+\beta)s_{n-2}
		-(\alpha \mathcal{B}+\beta \mathcal{A}-\mathcal{C}-\gamma)s_{n-3}\\
		-(\alpha \mathcal{C}+\beta \mathcal{B}+\gamma\mathcal{A})s_{n-4}
		-(\beta\mathcal{C}+\gamma\mathcal{B})s_{n-5}-\gamma\mathcal{C} s_{n-6}.
	\end{multline}
\end{theorem}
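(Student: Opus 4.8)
The plan is to show that the annihilating polynomial of $(s_n)$ is exactly the product of the characteristic polynomial $p(t)=t^3-\alpha t^2-\beta t-\gamma$ of $(a_n)$ and the characteristic polynomial $q(t)=t^3-\mathcal{A}t^2-\mathcal{B}t-\mathcal{C}$ of $(b_n)$. Indeed, expanding
$$p(t)\,q(t)=t^6-(\alpha+\mathcal{A})t^5-(\mathcal{B}-\alpha\mathcal{A}+\beta)t^4+(\alpha\mathcal{B}+\beta\mathcal{A}-\mathcal{C}-\gamma)t^3+(\alpha\mathcal{C}+\beta\mathcal{B}+\gamma\mathcal{A})t^2+(\beta\mathcal{C}+\gamma\mathcal{B})t+\gamma\mathcal{C}$$
and reading off the coefficients reproduces precisely the right-hand side of \eqref{eq:sn}, so it suffices to prove that $(s_n)$ is annihilated by the operator $p(E)q(E)$, where $E$ denotes the shift $E s_n=s_{n+1}$.

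First I would extract a $p$-recurrence ``up to boundary terms.'' Since the rows obey \eqref{eq:rec_abc}, that is, $a_i^{n}=\alpha a_i^{n-1}+\beta a_i^{n-2}+\gamma a_i^{n-3}$ for $i\le n-3$, I sum this identity over $i=0,\dots,n-3$. Using the definition \eqref{eq:sumrowdef}, each truncated column sum $\sum_{i=0}^{n-3}a_i^{n-j}$ differs from the full column sum $s_{n-j}$ only by the finitely many entries lying in rows below row $n-3$ of that column. Collecting these corrections yields
$$s_n-\alpha s_{n-1}-\beta s_{n-2}-\gamma s_{n-3}=w_n,$$
where $w_n=a_{n-2}^{n}+a_{n-1}^{n}+a_n^{n}-\alpha a_{n-2}^{n-1}-\alpha a_{n-1}^{n-1}-\beta a_{n-2}^{n-2}$ is an explicit integer combination of diagonal entries.

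The crucial observation is that every summand of $w_n$ is a fixed diagonal sequence $(a_m^{m+\ell})_m$ evaluated at an index shifted from $n$ by a constant; for instance $a_{n-2}^{n}$ lies on the diagonal $\ell=2$ at row $n-2$, and $a_{n-1}^{n}$ on the diagonal $\ell=1$ at row $n-1$. By Theorem~\ref{th:rec_ABC} each diagonal sequence is annihilated by $q(E)$, and $q(E)$ is shift-invariant, so each shifted summand---hence their linear combination $w_n$---satisfies $q(E)w=0$. Writing $g_n:=s_n-\alpha s_{n-1}-\beta s_{n-2}-\gamma s_{n-3}$, which is $p(E)s$ up to an index shift, we have $g_n=w_n$, so applying $q(E)$ gives $q(E)p(E)s=q(E)w=0$. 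Expanding $g_n-\mathcal{A}g_{n-1}-\mathcal{B}g_{n-2}-\mathcal{C}g_{n-3}=0$ back in terms of the $s$-values produces the sixth-order relation \eqref{eq:sn}, valid once $n\ge 6$ so that all intermediate relations are in range.

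The main obstacle is the careful bookkeeping of the boundary corrections: the summation range $0\le i\le n-3$ stops three rows short of filling columns $n-2$, $n-1$, $n$, and I must identify exactly which entries are dropped and with which coefficients (the extra $\alpha$ and $\beta$ factors coming from the shifted columns) in order to pin down $w_n$ correctly. Once $w_n$ is recognized as a combination of diagonal sequences, the remainder is formal operator algebra, and the only routine loose end is the polynomial multiplication confirming that the coefficients of $p(t)q(t)$ agree with the six coefficients displayed in \eqref{eq:sn}.
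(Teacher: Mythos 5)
Your proposal is correct, and it takes a genuinely different route from the paper's. The paper also observes at the outset that the coefficients in \eqref{eq:sn} come from combining the row relation \eqref{eq:rec_abc} with the diagonal relation \eqref{eq:rec_ABC} (equivalently, from the product $p(t)q(t)$ of the two characteristic polynomials), but it does not turn this into an annihilator argument: instead it proves the two-step identity $s_n=-s_{n-2}+a_{n-1}^{n-1}+a_{n-1}^{n}+a_n^{n}-a_0^{n-1}$, verifies the base case $n=6$ by computer, and then performs an induction in which \eqref{eq:sn} for $s_{n-2}$ (reaching back to $s_{n-8}$) is substituted and regrouped using the sixth-order relations satisfied by the rows and by the diagonals. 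You instead prove the operator factorization $q(E)p(E)s=0$ directly: summing \eqref{eq:rec_abc} over rows $0\le i\le n-3$ and comparing with \eqref{eq:sumrowdef} gives $g_n=s_n-\alpha s_{n-1}-\beta s_{n-2}-\gamma s_{n-3}=w_n$, and your boundary term $w_n=a_{n-2}^{n}+a_{n-1}^{n}+a_n^{n}-\alpha a_{n-2}^{n-1}-\alpha a_{n-1}^{n-1}-\beta a_{n-2}^{n-2}$ is exactly right (columns $n$, $n-1$, $n-2$ are truncated by three, two, one entries respectively, picking up the factors $1$, $\alpha$, $\beta$); since every summand of $w_n$ lies on a fixed diagonal $\ell\in\{0,1,2\}$ at an index shifted from $n$ by a constant, Theorem~\ref{th:rec_ABC} yields $q(E)w=0$, and your expansion of $p(t)q(t)$ does reproduce the six displayed coefficients. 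Your route buys a shorter proof with no computer-checked base case and a transparent threshold: $g_n=w_n$ holds from $n\ge3$ and applying $q(E)$ costs three more indices, which is precisely the hypothesis $n\ge6$. The paper's route stays entirely within the elementary term-by-term inductions used throughout the rest of the paper, at the cost of heavier bookkeeping and a machine-verified initial case.
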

\begin{proof}
	First, we prove that the recurrence relations of sequences $(a_n^k)$, $(a_n^{n+\ell})$ $(\ell\geq0)$ are the same as relation \eqref{eq:sn}.
	Extend relation \eqref{eq:rec_abc} to sixth order  homogeneous linear recurrence relation by sum of $a_n^k$, $\mathcal{A}a_n^{k-1}$,  $\mathcal{B}a_n^{k-2}$ and $\mathcal{C}a_n^{k-3}$, moreover extend \eqref{eq:rec_ABC} also by sum of $a_n^{n+\ell}$, $\alpha a_{n-1}^{n+\ell-1}$,  $\beta a_{n-2}^{n+\ell-2}$ and $\gamma a_{n-3}^{n+\ell-3}$. Then we obtain the same coefficients as in \eqref{eq:sn}.
	
	Second, we give the connection between  $s_n$ and $s_{n-2}$. It is from  \begin{eqnarray*}
		s_{n-1}-a_0^{n-1}&=&\sum_{i=1}^{n-1}a_i^{n-1}=\sum_{i=1}^{n-1}\left(a_{i-1}^{n-2}+a_{i-1}^{n-1}+a_{i-1}^{n}\right)=\sum_{i=0}^{n-2}a_{i}^{n-2}+ \sum_{i=0}^{n-2}a_{i}^{n-1}+ \sum_{i=0}^{n-2}a_{i}^{n}\\
		&=&s_{n-2}+s_{n-1}-a_{n-1}^{n-1}+s_{n}-a_{n-1}^{n}-a_{n}^{n}.
	\end{eqnarray*} 
	Reordering the equation, we have 
	\begin{equation*}
		s_{n}=-s_{n-2}+a_{n-1}^{n-1}+a_{n-1}^{n}+a_{n}^{n}-a_0^{n-1}.
	\end{equation*}
	
	Third, we have finished the preparations to prove the theorem by induction on $n$. Because of the complexity in case $n=6$, we can check the recurrence \eqref{eq:sn} by computer. Then we suppose that \eqref{eq:sn} holds for cases up to $n-1$. We obtain 
	\begin{eqnarray*}
		s_{n}&=& -s_{n-2}+a_{n-1}^{n-1}+a_{n-1}^{n}+a_{n}^{n}-a_0^{n-1}\\
		&=& 
		-((\alpha+\mathcal{A})s_{n-3}
		-(\mathcal{B}-\alpha \mathcal{A}+\beta)s_{n-4}
		-(\alpha \mathcal{B}+\beta \mathcal{A}-\mathcal{C}-\gamma)s_{n-5}\\&& 
		\quad-(\alpha \mathcal{C}+\beta \mathcal{B}+\gamma\mathcal{A})s_{n-6}
		-(\beta\mathcal{C}+\gamma\mathcal{B})s_{n-7}
		-\gamma\mathcal{C} s_{n-8})\\&& 
		+(\alpha+\mathcal{A})a_{n-2}^{n-2}
		+(\mathcal{B}-\alpha \mathcal{A}+\beta)a_{n-3}^{n-3}
		+(\alpha \mathcal{B}+\beta \mathcal{A}-\mathcal{C}-\gamma)a_{n-4}^{n-4}\\ 	&& 
		\quad+(\alpha \mathcal{C}+\beta \mathcal{B}+\gamma\mathcal{A})a_{n-5}^{n-5}
		+(\beta\mathcal{C}+\gamma\mathcal{B})a_{n-6}^{n-6}
		+\gamma\mathcal{C} a_{n-7}^{n-7}
		\\&&  +\ \cdots \ \\	&& 
		-(\alpha+\mathcal{A})a_{0}^{n-2}
		-(\mathcal{B}-\alpha \mathcal{A}+\beta)a_0^{n-3}
		-(\alpha \mathcal{B}+\beta \mathcal{A}-\mathcal{C}-\gamma)a_0^{n-4}\\&& 
		\quad-(\alpha \mathcal{C}+\beta \mathcal{B}+\gamma\mathcal{A})a_0^{n-5}
		-(\beta\mathcal{C}+\gamma\mathcal{B})a_0^{n-6}
		-\gamma\mathcal{C} a_0^{n-7}\\
		&=& (\alpha+\mathcal{A})(-s_{n-3}+a_{n-2}^{n-2}+a_{n-2}^{n-1}+a_{n-1}^{n-1}-a_0^{n-2})\\
		&&+\ (\mathcal{B}-\alpha \mathcal{A}+\beta)(-s_{n-4}+a_{n-3}^{n-3}+a_{n-3}^{n-2}+a_{n-2}^{n-2}- a_0^{n-3})\\
		&&-\ (\alpha \mathcal{B}+\beta \mathcal{A}-\mathcal{C}-\gamma)(-s_{n-5}+a_{n-4}^{n-4}+a_{n-4}^{n-3}+ a_{n-3}^{n-3}-a_0^{n-4})\\
		&&-\ (\alpha \mathcal{C}+\beta \mathcal{B}+\gamma\mathcal{A})(-s_{n-6}+a_{n-5}^{n-5}+a_{n-5}^{n-4}+a_{n-4}^{n-4}- a_0^{n-5})\\
		&&-\ (\beta\mathcal{C}+\gamma\mathcal{B})(-s_{n-7}+a_{n-6}^{n-6}+a_{n-6}^{n-5}+ a_{n-5}^{n-5}-a_0^{n-6})\\
		&&-\ \gamma\mathcal{C} (-s_{n-8}+a_{n-7}^{n-7}+a_{n-7}^{n-6}+ a_{n-6}^{n-6}- a_0^{n-7}) \\
		&=&  (\alpha+\mathcal{A})s_{n-1}
		+(\mathcal{B}-\alpha \mathcal{A}+\beta)s_{n-2}
		-(\alpha \mathcal{B}+\beta \mathcal{A}-\mathcal{C}-\gamma)s_{n-3}\\
		&&	-(\alpha \mathcal{C}+\beta \mathcal{B}+\gamma\mathcal{A})s_{n-4}
		-(\beta\mathcal{C}+\gamma\mathcal{B})s_{n-5}-\gamma\mathcal{C} s_{n-6}.
	\end{eqnarray*}
\end{proof}

Let  $(\bar{s}_n)$  be the alternating sum of the values of columns $n$, so that
\begin{equation*}\label{eq:altsumrow}
	\bar{s}_n=\sum_{i=0}^{n}(-1)^ia_i^n=\sum_{i=0}^{n}  \sum_{k=0}^{2i} (-1)^i {\binomtwo{i}{k}} a_0^{n+k-i}.
\end{equation*}	

\begin{theorem}	
	If $n\geq 6$, then the alternating sum sequences $(\bar{s}_n)$ can be described by the sixth order  homogeneous linear recurrence relation 
	\begin{multline*}\label{eq:altsn}
		\bar{s}_{n} = (\alpha-\mathcal{A})\bar{s}_{n-1}
		+(\mathcal{B}+\alpha \mathcal{A}+\beta)\bar{s}_{n-2}
		-(\alpha \mathcal{B}-\beta \mathcal{A}+\mathcal{C}-\gamma)\bar{s}_{n-3}\\
		+(\alpha \mathcal{C}-\beta \mathcal{B}+\gamma\mathcal{A})\bar{s}_{n-4}
		+(\beta\mathcal{C}-\gamma\mathcal{B})\bar{s}_{n-5}+ \gamma\mathcal{C} \bar{s}_{n-6}.
	\end{multline*}	
\end{theorem}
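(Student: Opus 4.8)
The plan is to mirror the three-part proof of the preceding theorem for $(s_n)$, replacing the diagonal recurrence by its sign-flipped analogue to absorb the factor $(-1)^i$. First I would record the characteristic-polynomial picture: the sixth-order relation claimed for $(\bar{s}_n)$ should have characteristic polynomial
\[
(t^3-\alpha t^2-\beta t-\gamma)(t^3+\mathcal{A}t^2-\mathcal{B}t+\mathcal{C}),
\]
and expanding this product reproduces exactly the six coefficients $\alpha-\mathcal{A}$, $\mathcal{B}+\alpha\mathcal{A}+\beta$, $-(\alpha\mathcal{B}-\beta\mathcal{A}+\mathcal{C}-\gamma)$, $\alpha\mathcal{C}-\beta\mathcal{B}+\gamma\mathcal{A}$, $\beta\mathcal{C}-\gamma\mathcal{B}$, $\gamma\mathcal{C}$. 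The first factor is the row recurrence \eqref{eq:rec_abc}, so every row sequence $(a_n^k)_k$ is annihilated by the sixth-order operator. For the second factor I would note that if $a_n^{n+\ell}$ obeys \eqref{eq:rec_ABC}, then the signed diagonal sequence $d_n=(-1)^n a_n^{n+\ell}$ satisfies $d_n=-\mathcal{A}d_{n-1}+\mathcal{B}d_{n-2}-\mathcal{C}d_{n-3}$, whose characteristic polynomial is the second cubic $t^3+\mathcal{A}t^2-\mathcal{B}t+\mathcal{C}$. Hence every signed diagonal sequence, in particular $(-1)^n a_n^n$ and $(-1)^n a_n^{n+1}$, is also annihilated by the sixth-order operator.

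Second I would establish the analogue of the connection formula. Starting from $\bar{s}_{n-1}-a_0^{n-1}=\sum_{i=1}^{n-1}(-1)^i a_i^{n-1}$, I expand each $a_i^{n-1}$ by the construction rule \eqref{eq:defsum} and reindex $i\mapsto i-1$; the shift introduces an overall sign $-(-1)^i$, which is the one structural difference from the $(s_n)$ computation. Collecting the three resulting column sums and isolating $\bar{s}_n$ yields
\[
\bar{s}_n=-2\bar{s}_{n-1}-\bar{s}_{n-2}+(-1)^{n-1}\!\left(a_{n-1}^{n-1}+a_{n-1}^{n}\right)+(-1)^{n}a_n^{n}+a_0^{n-1}.
\]
The terms on the right are precisely the signed diagonal values $(-1)^{n}a_n^n$, $(-1)^{n-1}a_{n-1}^{n-1}$, $(-1)^{n-1}a_{n-1}^{n}$ together with the row-$0$ value $a_0^{n-1}=a_{n-1}$, so by the first step their combination satisfies the sixth-order recurrence. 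Note that, unlike the $(s_n)$ case where the first-order term cancelled, here the sign flip leaves the extra summand $-2\bar{s}_{n-1}$.

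Third I would run the induction on $n$. After checking the base cases by computer (as for $(s_n)$), I assume the recurrence holds below $n$, substitute the sixth-order recurrence into $\bar{s}_{n-1}$, $\bar{s}_{n-2}$ and into each boundary term, and regroup according to the six coefficients $c_1,\dots,c_6$. For each $j$ the $c_j$-group is exactly $-2\bar{s}_{n-1-j}-\bar{s}_{n-2-j}+(\text{boundary combination at index }n-j)$, which by the connection formula at index $n-j$ collapses to $c_j\bar{s}_{n-j}$; summing over $j$ reconstructs the claimed relation. The main obstacle is bookkeeping: keeping the $(-1)^n$ factors and the extra $-2\bar{s}_{n-1}$ term consistent through the reindexing, and confirming the sign pattern of the second cubic factor, since it is precisely these signs that distinguish the alternating statement from the $(s_n)$ one and that force the regrouping to close up correctly.
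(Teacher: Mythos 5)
Your proposal is correct, but it is a genuinely fuller argument than the one the paper gives. The paper disposes of this theorem in three sentences: because the alternating signs are constant along each row and alternate along each diagonal, it simply substitutes $\mathcal{A}\mapsto-\mathcal{A}$ and $\mathcal{C}\mapsto-\mathcal{C}$ into \eqref{eq:sn} (leaving $\alpha,\beta,\gamma$ and $\mathcal{B}$ untouched), implicitly appealing to the fact that the entire three-step proof for $(s_n)$ transfers to the signed setting. Your proof is that transfer carried out in detail, and it exposes the one place where the transfer is \emph{not} a mechanical sign substitution: the connection formula. I verified your version, $\bar{s}_n=-2\bar{s}_{n-1}-\bar{s}_{n-2}+(-1)^{n-1}\bigl(a_{n-1}^{n-1}+a_{n-1}^{n}\bigr)+(-1)^{n}a_{n}^{n}+a_0^{n-1}$; unlike the $(s_n)$ case, where the first-order terms cancel, here they reinforce, and the row-zero term enters with the opposite sign. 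The induction nevertheless closes exactly as you describe, because the boundary terms are shifts of the signed diagonal sequences $(-1)^n a_n^{n+\ell}$ (annihilated by $t^3+\mathcal{A}t^2-\mathcal{B}t+\mathcal{C}$, the sign-flipped form of \eqref{eq:rec_ABC}) and of the row-zero sequence (annihilated by $t^3-\alpha t^2-\beta t-\gamma$), so both cubic factors of your sixth-degree characteristic polynomial are used; its expansion does reproduce the six stated coefficients. In short, the paper's route buys brevity at the price of being only a sketch, while yours buys a self-contained proof together with the genuinely needed check that the altered connection formula does not derail the regrouping; the only bookkeeping caveat is that, since the induction step reaches back to $\bar{s}_{n-8}$, the computer-checked base cases should cover the first two admissible indices rather than a single one.
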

\begin{proof}
	Row by row the signs of the terms in the alternating sums change in directions parallel to the diagonal, hence we have to change the sign of $\mathcal{A}$ and $\mathcal{C}$ in the summation relation \eqref{eq:sn}. They have influence on the signs of every next and third terms, respectively. In a row the signs of the terms do not change.
\end{proof}

\section{Examples}

In this section, we give some examples for the trinomial transform triangles generated by ternary linear recurrent sequences.

\subsection{Fibonacci sequence}

Let the base sequence be the Fibonacci sequence $(F_k)_{k=0}^\infty$  defined by $F_0=0$, $F_1=1$ and $F_k=F_{k-1}+F_{k-2}$, $k\geq 2$ (\seqnum{A000045}). If we extend it to a ternary recurrence (with sum of $F_k$ and $F_{k-1}$), then we obtain that $F_k=2F_{k-1}-F_{k-3}$,  $k\geq 3$. So let $a_0^k=F_k$ for any $k$, moreover $\alpha=2$, $\beta=0$ and $\gamma=-1$.

When we substitute the initial values into  \T, then we have the Fibonacci trinomial transform triangle, see Table~\ref{tab:fib}.

\begin{table}[htb]
	\centering
	\setlength{\tabcolsep}{3pt}
	\begin{tabular}{|c|cccccccccc|}\hline
		& 0           & 1          & 2          & 3           & 4            & 5            & 6             & 7             & 8              & 9                          \\ \hline
		0         & 0          & 1          & 1           & 2           & 3            & 5             & 8              & 13             & 21              & 34      \\
		1         &            & 2          & 4           & 6           & 10           & 16            & 26             & 42             & 68              & 110     \\
		2         &            &            & 12          & 20          & 32           & 52            & 84             & 136            & 220             & 356     \\
		3         &            &            &             & 64          & 104          & 168           & 272            & 440            & 712             & 1152    \\
		4         &            &            &             &             & 336          & 544           & 880            & 1424           & 2304            & 3728    \\
		5         &            &            &             &             &              & 1760          & 2848           & 4608           & 7456            & 12064   \\
		6         &            &            &             &             &              &               & 9216           & 14912          & 24128           & 39040   \\
		7         &            &            &             &             &              &               &                & 48256          & 78080           & 126336  \\
		8         &            &            &             &             &              &               &                &                & 252672          & 408832  \\
		9         &            &            &             &             &              &               &                &                &                 & 1323008 \\ \hline
		$s_n$  & \textbf{0} & \textbf{3} & \textbf{17} & \textbf{92} & \textbf{485} & \textbf{2545} & \textbf{13334} & \textbf{69831} & \textbf{365661} & \textbf{1914660}\\
		$\bar{s}_n$ & 0 & $-1$ & 9 & $-48$ & 257 & $-1343$& 7042 & $-36861$ & 193029 & $-1010680$\\ \hline
	\end{tabular}
	\caption{Fibonacci trinomial transform triangle} \label{tab:fib}
\end{table}

Having observed the rows of Table~\ref{tab:fib}, we have found that they are the well-known \mbox{$i$-Fibonacci} sequences, defined by $F^{[i]}_0=0$, $F^{[i]}_1=i$ and $F^{[i]}_j=F^{[i]}_{j-1}+F^{[i]}_{j-2}$, $j\geq 2$. This result is expressed in the following theorem.

\begin{theorem}\label{th:fib01}
	The terms of the rows in Table~\ref{tab:fib} are the terms of the $2^n$-Fibonacci sequences, so that
	\begin{equation*}\label{eq:fib_af}
		a_n^k= F^{[2^n]}_{k+n}.
	\end{equation*} 
\end{theorem}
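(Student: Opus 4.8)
The plan is to first simplify the right-hand side. Since the $i$-Fibonacci sequence obeys the ordinary Fibonacci recurrence with initial data $F^{[i]}_0=0$ and $F^{[i]}_1=i$, linearity of that recurrence in its initial values gives $F^{[i]}_j=i\,F_j$ for every $j\geq 0$. Hence I would recast the assertion $a_n^k=F^{[2^n]}_{k+n}$ in the cleaner equivalent form
$$a_n^k = 2^n F_{k+n} \qquad (0\le n\le k),$$
and prove that instead. (The pattern is already visible in Table~\ref{tab:fib}: row $n$ is $2^n$ times a shifted Fibonacci sequence.)

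For the core of the argument I would run a short induction on $n$ using the defining recurrence \eqref{eq:defsum}. The base case $n=0$ is immediate, since $a_0^k=F_k=2^0F_{k+0}$. For the inductive step I would assume $a_{n-1}^k=2^{n-1}F_{k+n-1}$ for all admissible $k$, and then apply \eqref{eq:defsum} together with the hypothesis to obtain
$$a_n^k=a_{n-1}^{k-1}+a_{n-1}^{k}+a_{n-1}^{k+1}=2^{n-1}\bigl(F_{k+n-2}+F_{k+n-1}+F_{k+n}\bigr).$$
The single Fibonacci identity $F_{m-2}+F_{m-1}+F_m=2F_m$ (from $F_{m-2}+F_{m-1}=F_m$, with $m=k+n$) then collapses the bracket to $2F_{k+n}$, giving $a_n^k=2^nF_{k+n}$, which closes the induction.

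I do not expect any serious obstacle; the only points needing care are bookkeeping ones. First, I would check that the induction respects the left boundary $k=n$: evaluating $a_n^n$ via \eqref{eq:defsum} calls for $a_{n-1}^{n-1}$, $a_{n-1}^{n}$, and $a_{n-1}^{n+1}$, all of which lie in row $n-1$ with admissible upper indices, and the smallest Fibonacci subscript produced is $F_{2n-2}\geq 0$, so no recourse to the negative-index extension is needed. Second, I would state the reduction $F^{[i]}_j=iF_j$ explicitly, since the theorem is phrased through $i$-Fibonacci sequences. An alternative route would be to substitute the closed form of Theorem~\ref{th:base} and evaluate $\sum_i \binomtwo{n}{i}F_{k-n+i}$ directly, but that demands a separate trinomial-coefficient identity for $\sum_i \binomtwo{n}{i}F_{m+i}$ and is strictly more work than the two-line induction above, so I would favour the direct recurrence.
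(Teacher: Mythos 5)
Your proof is correct and follows essentially the same route as the paper's: induction on $n$ via the recurrence \eqref{eq:defsum}, collapsing $F_{m-2}+F_{m-1}+F_m$ to $2F_m$, and using the linearity relation $F^{[i]}_j=i\,F_j$ (which the paper invokes inside the induction step rather than up front, as you do). Your version is if anything slightly cleaner in bookkeeping --- the paper's displayed computation contains index typos ($k-n$ where $k+n$ is meant) that your reformulation $a_n^k=2^nF_{k+n}$ avoids.
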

\begin{proof}
	Obviously, for case $n=1$ the theorem is true. Let us suppose that the result is also true for $n-1$. As the relation between the Fibonacci and $i$-Fibonacci sequences is $i\cdot F_k=F^{[i]}_k$, we gain
	\begin{eqnarray*}
		a_n^k&=& a_{n-1}^{k-1}+a_{n-1}^{k}+a_{n-1}^{k+1}
		=F^{[2^{n-1}]}_{k-1+n-1}+F^{[2^{n-1}]}_{k+n-1}+F^{[2^{n-1}]}_{k+1+n-1)}\\
		&=& 2 F^{[2^{n-1}]}_{k-n}= F^{[2^{n}]}_{k-n}.
	\end{eqnarray*}
\end{proof}

For all the directions parallel to the diagonal of Table~\ref{tab:fib} and for the trinomial transform sequence, we obtain from \eqref{eq:rec_ABC} and Theorem~\ref{th:trasform_of_ternary} the following corollaries.
\begin{corollary} If $2\leq n\leq k$, then
	\begin{equation*}\label{eq:fib_diag}
		a_n^k=6 a_{n-1}^{k-1}-4  a_{n-2}^{k-2}.
	\end{equation*}
\end{corollary}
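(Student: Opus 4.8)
The plan is to obtain the corollary as a direct specialization of Theorem~\ref{th:rec_ABC}, which already establishes the diagonal recurrence $a_n^{n+\ell}=\mathcal{A}a_{n-1}^{n+\ell-1}+\mathcal{B}a_{n-2}^{n+\ell-2}+\mathcal{C}a_{n-3}^{n+\ell-3}$ for every $\ell\geq 0$. Since the assertion $a_n^k=6a_{n-1}^{k-1}-4a_{n-2}^{k-2}$ relates three terms lying along a direction parallel to the main diagonal (the upper index drops by one each time the lower index drops by one), I would first write $k=n+\ell$ for the appropriate $\ell\geq 0$ and observe that $a_n^k$, $a_{n-1}^{k-1}$, $a_{n-2}^{k-2}$ are exactly $a_n^{n+\ell}$, $a_{n-1}^{(n-1)+\ell}$, $a_{n-2}^{(n-2)+\ell}$. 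Thus the diagonal recurrence applies verbatim, and the only remaining task is to evaluate the three structural constants $\mathcal{A}$, $\mathcal{B}$, $\mathcal{C}$ at the Fibonacci parameters.

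The computational core is then to substitute $\alpha=2$, $\beta=0$, $\gamma=-1$ into the definitions
\begin{align*}
\mathcal{A}&=\alpha^2+\alpha+2\beta+3,\\
\mathcal{B}&=-2\alpha^2+\alpha\beta+2\alpha\gamma-\beta^2-2\alpha-3\beta+3\gamma-3,\\
\mathcal{C}&=\alpha^2-\alpha\beta-\alpha\gamma+\beta^2-\beta\gamma+\gamma^2+\alpha+\beta-2\gamma+1.
\end{align*}
A short arithmetic check gives $\mathcal{A}=4+2+0+3=9$, and I expect $\mathcal{B}$ and $\mathcal{C}$ to collapse to values that reproduce the stated coefficients $6$ and $-4$. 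Here a subtlety arises: plugging in naively yields $\mathcal{A}=9$, not $6$, so the third-order diagonal relation with constants $(9,\mathcal{B},\mathcal{C})$ does not literally coincide with the second-order relation $(6,-4)$ claimed in the corollary. The reconciliation is that the Fibonacci sequence, although extended to a ternary recurrence, genuinely satisfies a binary recurrence, so its trinomial transform likewise degenerates from order three to order two; the characteristic polynomial of the $(\mathcal{A},\mathcal{B},\mathcal{C})$ recurrence must factor, sharing a common factor that cancels and leaves the reduced quadratic recurrence $x^2-6x+4$.

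The cleanest route, which I would adopt, is therefore to derive the coefficients $6$ and $-4$ from the transformed characteristic roots rather than directly from $\mathcal{A},\mathcal{B},\mathcal{C}$. By the final theorem of the section, the zeros of the characteristic polynomial of $(b_k)$ are $\omega^2+\omega+1$ evaluated at the zeros $\omega$ of the base polynomial. For Fibonacci, the essential zeros are the golden ratio roots $\omega_{1,2}=(1\pm\sqrt 5)/2$, satisfying $\omega^2=\omega+1$, so $\omega^2+\omega+1=2\omega+2=2(\omega+1)=2\omega^2$. Hence the transformed roots are $2\omega_1^2$ and $2\omega_2^2$, and I would compute their elementary symmetric functions: their sum is $2(\omega_1^2+\omega_2^2)$ and their product is $4(\omega_1\omega_2)^2$. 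Using $\omega_1+\omega_2=1$, $\omega_1\omega_2=-1$, one gets $\omega_1^2+\omega_2^2=3$ and $(\omega_1\omega_2)^2=1$, giving sum $6$ and product $4$, so the transformed sequence obeys $b_n=6b_{n-1}-4b_{n-2}$. The same relation then propagates along every diagonal direction by the parallelism already noted, yielding $a_n^k=6a_{n-1}^{k-1}-4a_{n-2}^{k-2}$ for $2\leq n\leq k$.

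The main obstacle is precisely this order reduction: I must explain why the generic third-order diagonal recurrence of Theorem~\ref{th:rec_ABC} simplifies to a second-order one in the Fibonacci case. I would handle it by verifying that the cubic characteristic polynomial $t^3-\mathcal{A}t^2-\mathcal{B}t-\mathcal{C}$ at $(\alpha,\beta,\gamma)=(2,0,-1)$ factors as $(t-\lambda)(t^2-6t+4)$ for the spurious root $\lambda=\omega_0^2+\omega_0+1$ coming from the extra zero $\omega_0$ introduced when extending the binary recurrence to a ternary one (the root of $t+1$ in $t^2-2t^2\cdots$, i.e.\ the $\gamma\neq 0$ padding); since the initial data lie entirely in the golden-ratio eigenspace, the $\lambda$-component vanishes and only the quadratic factor governs the sequence. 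Everything else is routine substitution, so the proof reduces to this one structural remark followed by the symmetric-function computation above.
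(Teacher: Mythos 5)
Your proposal is correct in substance, and it takes a genuinely different---and more careful---route than the paper. The paper gives no actual proof: it simply asserts that the corollary follows from \eqref{eq:rec_ABC} and Corollary~\ref{th:trasform_of_ternary}. As you rightly point out, that citation taken literally yields, at $(\alpha,\beta,\gamma)=(2,0,-1)$, the \emph{third}-order diagonal relation with $(\mathcal{A},\mathcal{B},\mathcal{C})=(9,-22,12)$, not the stated second-order one, so an order-reduction step is genuinely needed and the paper never supplies it. Your repair is sound: the cubic $t^{3}-9t^{2}+22t-12$ factors as $(t-3)\left(t^{2}-6t+4\right)$, the root $3=\omega_0^{2}+\omega_0+1$ is the image of the spurious root $\omega_0$ of the ternary extension, the Fibonacci data have no component along it, and your symmetric-function computation (transformed roots $2\omega_{1,2}^{2}=3\pm\sqrt{5}$, sum $6$, product $4$) is exactly right. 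Two small repairs are needed. First, the spurious factor of $t^{3}-2t^{2}+1=(t-1)\left(t^{2}-t-1\right)$ is $t-1$, not $t+1$; so $\omega_0=1$ and $\lambda=3$, which is what makes your factorization consistent. Second, the ``propagation to all diagonals'' should be made explicit: by Theorem~\ref{th:base}, the $\ell$-th diagonal $(a_n^{n+\ell})_n$ is precisely the trinomial transform of the shifted sequence $(F_{k+\ell})_k$, which satisfies the same binary recurrence with the same (distinct) roots, so the final theorem of the section applies verbatim to every diagonal, not just the main one. Finally, note that there is a two-line alternative closer to the paper's own toolkit: Theorem~\ref{th:fib01} gives $a_n^k=2^{n}F_{n+k}$, and the identity $F_m=3F_{m-2}-F_{m-4}$ then yields $6a_{n-1}^{k-1}-4a_{n-2}^{k-2}=2^{n}\left(3F_{n+k-2}-F_{n+k-4}\right)=2^{n}F_{n+k}=a_n^k$. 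Your root-theoretic argument buys generality (it explains why the trinomial transform of \emph{any} binary recurrence with distinct roots is again binary, and where the coefficients $6$, $-4$ come from), while the Fibonacci-identity route buys brevity.
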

\begin{corollary} 
	The trinomial transform sequence $(b_n)$ of the Fibonacci sequence is the binary sequence  $b_n=6b_{n-1}-4b_{n-2}$  with initial elements $b_0=0$, $b_1=2$. (The main diagonal of Table~\ref{tab:fib}.) 
\end{corollary}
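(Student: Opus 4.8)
The plan is to invoke the Main Corollary (Corollary~\ref{th:trasform_of_ternary}) directly, since the Fibonacci sequence is precisely the ternary recurrent sequence with $\alpha=2$, $\beta=0$, $\gamma=-1$ already fixed above. First I would substitute these values into the formulas for $\mathcal{A}$, $\mathcal{B}$, $\mathcal{C}$, obtaining $\mathcal{A}=9$, $\mathcal{B}=-22$ and $\mathcal{C}=12$; thus Corollary~\ref{th:trasform_of_ternary} guarantees that the trinomial transform $(b_n)$ obeys the ternary recurrence $b_n=9b_{n-1}-22b_{n-2}+12b_{n-3}$ with the stated initial values $b_0=F_0=0$, $b_1=F_0+F_1+F_2=2$, and (after the same substitution into the $b_2$ expression) $b_2=12$.

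The crux is that this ternary recurrence degenerates to the claimed binary one because its characteristic polynomial factors as
\begin{equation*}
	t^3-9t^2+22t-12=(t-3)\,(t^2-6t+4),
\end{equation*}
and the desired recurrence $b_n=6b_{n-1}-4b_{n-2}$ has characteristic polynomial exactly the quadratic factor $t^2-6t+4$. To exploit this I would introduce the auxiliary sequence $c_n=b_n-6b_{n-1}+4b_{n-2}$ and show, using only the ternary recurrence above, that $c_n=3c_{n-1}$ for $n\geq 3$ (a one-line computation: replacing $b_n$ by $9b_{n-1}-22b_{n-2}+12b_{n-3}$ turns $c_n$ into $3(b_{n-1}-6b_{n-2}+4b_{n-3})$). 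Computing the single starting value $c_2=b_2-6b_1+4b_0=12-12+0=0$ then forces $c_n=0$ for every $n\geq 2$ by induction, which is precisely the asserted binary recurrence.

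The only point needing genuine care is the vanishing $c_2=0$: this is where the special initial data of the Fibonacci sequence enters, and it is what suppresses the spurious mode $3^n$. Equivalently, one may argue through the Binet-type theorem proved above: the extended Fibonacci characteristic polynomial is $(t-1)(t^2-t-1)$, so by the usual Binet formula the coefficient attached to the root $1$ is zero; the transformed root $1^2+1+1=3$ therefore carries zero weight, leaving $b_n$ as a combination of $(3\pm\sqrt5)^n$, the roots of $t^2-6t+4$. I expect no real obstacle beyond the short algebraic checks of the factorization, of the collapse $c_n=3c_{n-1}$, and of $c_2=0$.
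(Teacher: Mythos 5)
Your proof is correct, and it is genuinely different from (and more complete than) what the paper offers. The paper states this corollary with no explicit argument, merely asserting that it follows from the diagonal recurrence \eqref{eq:rec_ABC} and Corollary~\ref{th:trasform_of_ternary}; but those results only yield the \emph{third}-order recurrence $b_n=9b_{n-1}-22b_{n-2}+12b_{n-3}$, so taken literally the paper leaves a gap that your argument fills. The paper's natural in-house justification is instead Theorem~\ref{th:fib01}: the rows are $2^n$-Fibonacci sequences, so $b_n=a_n^n=F^{[2^n]}_{2n}=2^nF_{2n}$, and the binary recurrence then drops out of the bisection identity $F_{2n}=3F_{2n-2}-F_{2n-4}$ after multiplying by $2^n$. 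Your route stays entirely inside the general ternary machinery: you correctly compute $\mathcal{A}=9$, $\mathcal{B}=-22$, $\mathcal{C}=12$ and $b_2=12$, verify the factorization $t^3-9t^2+22t-12=(t-3)(t^2-6t+4)$, and eliminate the spurious root $3$ by showing $c_n=b_n-6b_{n-1}+4b_{n-2}$ satisfies $c_n=3c_{n-1}$ with $c_2=12-6\cdot2+4\cdot0=0$ --- all checks are right, including the delicate point that the vanishing of $c_2$ is exactly where the Fibonacci initial data enters. Your alternative Binet sketch is also sound and aligns with the paper's general root-transformation theorem: since $F$ has zero weight on the root $1$ of $(t-1)(t^2-t-1)$, the transform has zero weight on $1^2+1+1=3$, leaving only the roots $3\pm\sqrt5$ of $t^2-6t+4$. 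What the paper's (implicit) approach buys is brevity via the special structure $b_n=2^nF_{2n}$; what yours buys is rigor within the general framework and a template that works for any base sequence whose Binet expansion omits a root.
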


In OEIS the trinomial transform of the Fibonacci numbers is the sequence \seqnum{A082761} $(1, 4, 20, 104,\ldots)$, which can be seen in Table~\ref{tab:fib} as the second diagonal.

We obtain the following corollaries from the theorems of the previous section.

\begin{corollary} If $2\leq n\leq k$, then 
	\begin{eqnarray*}
		F^{[2^n]}_{k+n}&=& 2F^{[2^{n-1}]}_{k+n-1} + 4 F^{[2^{n-2}]}_{k+n-2},\\
		F^{[2^n]}_{k+n}&=& 6F^{[2^{n-1}]}_{k+n-2} -4 F^{[2^{n-2}]}_{k+n-4}.
	\end{eqnarray*}
\end{corollary}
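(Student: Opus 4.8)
The plan is to reduce both identities to recurrences for the table entries $a_n^k$ that are either already proven or immediate, using the closed form supplied by Theorem~\ref{th:fib01}. Since $F^{[i]}_j=i\,F_j$, that theorem collapses to the single clean formula
\[
a_n^k=F^{[2^n]}_{k+n}=2^n F_{k+n}.
\]
Under this dictionary the shifted entries translate as $a_{n-1}^{k}=F^{[2^{n-1}]}_{k+n-1}$, $a_{n-2}^{k}=F^{[2^{n-2}]}_{k+n-2}$, $a_{n-1}^{k-1}=F^{[2^{n-1}]}_{k+n-2}$ and $a_{n-2}^{k-2}=F^{[2^{n-2}]}_{k+n-4}$. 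Hence the two claimed formulas are exactly
\[
a_n^k=2\,a_{n-1}^{k}+4\,a_{n-2}^{k}\qquad\text{and}\qquad a_n^k=6\,a_{n-1}^{k-1}-4\,a_{n-2}^{k-2},
\]
the first a recurrence down a fixed column $k$, the second a recurrence along a diagonal. So the whole task is to establish these two recurrences and then read them back through the dictionary.

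The diagonal identity needs no new work: it is precisely the preceding corollary $a_n^k=6a_{n-1}^{k-1}-4a_{n-2}^{k-2}$, obtained from \eqref{eq:rec_ABC} and Theorem~\ref{th:trasform_of_ternary} with $\alpha=2$, $\beta=0$, $\gamma=-1$. I would simply substitute $a_n^k=F^{[2^n]}_{k+n}$ into that corollary and record the indices as above. For the column identity I would argue straight from the closed form: by the Fibonacci recurrence,
\[
2\,a_{n-1}^{k}+4\,a_{n-2}^{k}=2^n F_{k+n-1}+2^n F_{k+n-2}=2^n\bigl(F_{k+n-1}+F_{k+n-2}\bigr)=2^n F_{k+n}=a_n^k.
\]
Translating back gives $F^{[2^n]}_{k+n}=2F^{[2^{n-1}]}_{k+n-1}+4F^{[2^{n-2}]}_{k+n-2}$, as claimed.

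The only delicate point, and the mild obstacle worth flagging, concerns deriving the column identity from Theorem~\ref{th:column} rather than from the closed form. Specializing that theorem with $\alpha=2$, $\beta=0$, $\gamma=-1$ gives $\mathcal{P}=-5$, $\mathcal{Q}=2$, $\mathcal{C}=12$, hence the \emph{third}-order relation $a_n^k=5a_{n-1}^k-2a_{n-2}^k-12a_{n-3}^k$, whereas the corollary asserts a second-order one. The reconciliation is that the characteristic polynomial $t^3-5t^2+2t+12$ factors as $(t^2-2t-4)(t-3)$, and the column sequence $(2^n F_{k+n})_n$ has characteristic roots $2\varphi$ and $2\psi$ (twice the Fibonacci roots), whose minimal polynomial is exactly the quadratic factor $t^2-2t-4$. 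Thus the spurious root $3$ is absent and the genuine recurrence is the second-order $a_n^k=2a_{n-1}^k+4a_{n-2}^k$. Since justifying this order drop is the only nontrivial step in the Theorem~\ref{th:column} route, I would present the direct one-line Fibonacci computation above as the main proof and mention the factorization only as a consistency remark.
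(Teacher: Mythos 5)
Your proof is correct, and for the second (diagonal) identity it follows exactly the route the paper intends: the paper gives no explicit argument for this corollary beyond the blanket remark that it is ``obtained from the theorems of the previous section,'' and substituting the dictionary $a_n^k=F^{[2^n]}_{k+n}$ of Theorem~\ref{th:fib01} into the preceding corollary $a_n^k=6a_{n-1}^{k-1}-4a_{n-2}^{k-2}$ is precisely that implicit derivation. Where you genuinely add value is the first (column) identity. The paper's only general column result, Theorem~\ref{th:column}, specializes (with $\alpha=2$, $\beta=0$, $\gamma=-1$) to the \emph{third}-order relation $a_n^k=5a_{n-1}^k-2a_{n-2}^k-12a_{n-3}^k$, so the stated second-order identity does not follow from that theorem by mere substitution; the paper's citation glosses over this. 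Your one-line computation
\begin{equation*}
2a_{n-1}^k+4a_{n-2}^k=2^nF_{k+n-1}+2^nF_{k+n-2}=2^nF_{k+n}=a_n^k,
\end{equation*}
using $F^{[i]}_j=iF_j$ and the Fibonacci recurrence, supplies the missing argument cleanly, and your consistency remark is also right: $t^3-5t^2+2t+12=(t^2-2t-4)(t-3)$, and the column sequences $(2^nF_{k+n})_n$ involve only the roots $2\varphi$, $2\psi$ of the quadratic factor, which is exactly why the order drops from three to two. In short, your proof matches the paper's skeleton for the diagonal identity and is more careful than the paper where care is actually needed, namely for the column identity.
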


\begin{corollary} For the sum and alternating sum of columns, we have
	\begin{eqnarray*}
		s_n&=& \sum_{i=0}^{n}F^{[2^n]}_{i+n}=\sum_{i=0}^{2n} {\binomtwos{n}{i}} F_{i}=\sum_{i=0}^{n} \sum_{k=0}^{2i} \binomtwo{i}{k}F_{n+k-i},
		\\
		\bar{s}_n&=& \sum_{i=0}^{n}(-1)^iF^{[2^n]}_{i+n}=\sum_{i=0}^{n} (-1)^i \sum_{k=0}^{2i} \binomtwo{i}{k}F_{n+k-i}.
	\end{eqnarray*}
	Moreover, if $n\ge4$, then
	\begin{eqnarray*}\label{eq:fib_nat}
		s_k&=&7s_{k-1}-9s_{k-2}-2s_{k-3}+4s_{k-4},\\
		\bar{s}_k&=&-5\bar{s}_{k-1}+3\bar{s}_{k-2}+10\bar{s}_{k-3}+4\bar{s}_{k-4}.
	\end{eqnarray*}
	with initial values $s_0=0$, $s_1=3$,  $s_2=17$, $s_3=92$, and $\bar{s}_0=0$, $\bar{s}_1=-1$,  $\bar{s}_2=9$, $\bar{s}_3=-48$.
\end{corollary}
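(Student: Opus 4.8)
The plan is to treat the two halves of the corollary separately: the three summation expressions for $s_n$ and $\bar s_n$ are immediate specializations of results already proved in Section~3, while the two fourth-order recurrences are the genuinely new content and require explaining why the generic sixth-order recurrence of the previous section collapses to order four in the Fibonacci case.

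For the summation formulas I would first record the simplification that underlies everything: since the $i$-Fibonacci sequence satisfies $F^{[i]}_m=i\,F_m$, Theorem~\ref{th:fib01} gives $a_i^n=F^{[2^i]}_{i+n}=2^iF_{i+n}$. Substituting this into the definition $s_n=\sum_{i=0}^n a_i^n$ (and its alternating analogue) yields the closed forms
\[
s_n=\sum_{i=0}^n 2^iF_{n+i},\qquad \bar s_n=\sum_{i=0}^n(-2)^iF_{n+i}.
\]
The remaining two expressions for each of $s_n$ and $\bar s_n$ are nothing but the Section~3 identities $s_n=\sum_{\ell=0}^{2n}\binomtwos{n}{\ell}a_0^{\ell}$ and $s_n=\sum_{i=0}^n\sum_{k=0}^{2i}\binomtwo{i}{k}a_0^{n+k-i}$ read off with $a_0^{\ell}=F_{\ell}$, so this half of the statement needs no new argument.

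For the recurrences I would use the Binet formula rather than specialize the generic sixth-order relation. Writing $F_m=(\phi^m-\psi^m)/\sqrt5$ with $\phi,\psi$ the roots of $x^2-x-1$, the closed form above becomes a pair of finite geometric sums,
\[
s_n=\frac{1}{\sqrt5}\Bigl(\phi^n\sum_{i=0}^n(2\phi)^i-\psi^n\sum_{i=0}^n(2\psi)^i\Bigr),
\]
and likewise for $\bar s_n$ with $2$ replaced by $-2$. Summing the geometric series (legitimate since $2\phi-1,\,2\psi-1\neq0$) and using $\phi^n(2\phi)^{n+1}=2\phi\,(2\phi^2)^n$, one sees that $s_n$ is a linear combination of the four exponentials $(2\phi^2)^n$, $(2\psi^2)^n$, $\phi^n$, $\psi^n$. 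Because $2\phi^2=3+\sqrt5$ and $2\psi^2=3-\sqrt5$, these four bases are exactly the roots of $(x^2-6x+4)(x^2-x-1)=x^4-7x^3+9x^2+2x-4$, which is the claimed recurrence for $s_n$; the alternating case replaces the bases by $-(3\pm\sqrt5),\phi,\psi$, whose minimal polynomial is $(x^2+6x+4)(x^2-x-1)=x^4+5x^3-3x^2-10x-4$, giving the stated relation for $\bar s_n$. The four stated initial values are then checked directly from the closed forms.

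The only subtle point, and the right place to comment on it, is why the order drops from six to four. Specializing the general $(s_n)$ theorem with $\alpha=2,\beta=0,\gamma=-1$ (so $\mathcal A=9,\mathcal B=-22,\mathcal C=12$) produces the sixth-order characteristic polynomial $x^6-11x^5+40x^4-55x^3+15x^2+22x-12$, which factors as $(x^2-6x+4)(x^2-x-1)(x-1)(x-3)$. The two extra roots $1$ and $3$ are artefacts of having written the binary Fibonacci recurrence as the ternary one $x^3-2x^2+1=(x-1)(x^2-x-1)$: the spurious root $\omega=1$ receives weight zero in the Binet expansion of $(F_k)$, so neither it nor its transform $1^2+1+1=3$ actually occurs in $(s_n)$, and the sequence lives in the four-dimensional solution space of the displayed quartics. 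The Binet computation above makes this explicit and bypasses the need to argue the reduction from the sixth-order form, so I regard the geometric-sum manipulation (and the bookkeeping $2\phi^2=3+\sqrt5$) as the only place demanding care.
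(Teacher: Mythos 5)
Your proof is correct, and it takes a genuinely different --- and in fact more complete --- route than the paper. The paper offers no separate argument for this corollary: it is presented as an immediate specialization ($\alpha=2$, $\beta=0$, $\gamma=-1$, hence $\mathcal{A}=9$, $\mathcal{B}=-22$, $\mathcal{C}=12$) of the general theorems of the preceding sections, with the summation identities coming from the column-sum theorems together with Theorem~\ref{th:fib01}, and the recurrences from the sixth-order theorems for $(s_n)$ and $(\bar{s}_n)$. But that specialization literally yields only a sixth-order relation, with characteristic polynomial $(x^3-2x^2+1)(x^3-9x^2+22x-12)=(x-1)(x^2-x-1)(x-3)(x^2-6x+4)$ and valid only for $n\ge 6$, whereas the corollary asserts the quartic factor $(x^2-x-1)(x^2-6x+4)=x^4-7x^3+9x^2+2x-4$ as a recurrence valid already for $n\ge 4$; this is a strictly stronger claim, and the paper leaves the reduction from order six to order four entirely implicit. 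Your Binet computation supplies exactly the missing argument: it exhibits $s_n$ (and $\bar{s}_n$) as constant-coefficient combinations of just the four exponentials $(3\pm\sqrt{5})^n,\ \phi^n,\ \psi^n$ (respectively $(-(3\pm\sqrt{5}))^n,\ \phi^n,\ \psi^n$) for all $n\ge 0$, so the fourth-order recurrences hold from $n=4$ on, the spurious roots $1$ and $3$ --- artifacts of rewriting the binary Fibonacci recurrence as a ternary one --- having zero weight. What each approach buys: the paper's route is uniform across all ternary input sequences but, applied verbatim here, proves less than the corollary states (and would still require checking $n=4,5$ by hand even after factoring); your route is special to the Fibonacci case but self-contained, explains the order drop, and covers the full claimed range. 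One small remark: the superscript in the paper's $F^{[2^n]}_{i+n}$ is evidently a typo for $F^{[2^i]}_{i+n}=2^iF_{n+i}$, and your reading is the one consistent with Theorem~\ref{th:fib01} and Table~\ref{tab:fib}, so your silent correction is justified.
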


\subsection{Tribonacci sequence}

Let the base sequence $a_0^k$ be the Tribonacci sequence (\seqnum{A000073}), defined by $t_k=t_{k-1}+t_{k-2}+t_{k-3}$, $k\geq3$ with initial values $t_0=0$, $t_1=0$, $t_2=1$. So let $a_0^k=t_k$ and $\alpha=\beta=\gamma=1$. The Tribonacci trinomial transform triangle is depicted in Table~\ref{tab:tribo}. 

\begin{table}[htb]
	\centering
	\setlength{\tabcolsep}{3pt}
	\begin{tabular}{|c|cccccccccc|}\hline
		& 0           & 1          & 2          & 3           & 4            & 5            & 6             & 7             & 8              & 9                          \\ \hline
		0         & 0          & 0          & 1           & 1           & 2            & 4             & 7              & 13             & 24              & 44      \\
		1         &            & 1          & 2           & 4           & 7            & 13            & 24             & 44             & 81              & 149     \\
		2         &            &            & 7           & 13          & 24           & 44            & 81             & 149            & 274             & 504     \\
		3         &            &            &             & 44          & 81           & 149           & 274            & 504            & 927             & 1705    \\
		4         &            &            &             &             & 274          & 504           & 927            & 1705           & 3136            & 5768    \\
		5         &            &            &             &             &              & 1705          & 3136           & 5768           & 10609           & 19513   \\
		6         &            &            &             &             &              &               & 10609          & 19513          & 35890           & 66012   \\
		7         &            &            &             &             &              &               &                & 66012          & 121415          & 223317  \\
		8         &            &            &             &             &              &               &                &                & 410744          & 755476  \\
		9         &            &            &             &             &              &               &                &                &                 & 2555757 \\ \hline
		$s_k$   & \textbf{0} & \textbf{1} & \textbf{10} & \textbf{62} & \textbf{388} & \textbf{2419} & \textbf{15058} & \textbf{93708} & \textbf{583100} & \textbf{3628245}\\
		$\bar{s}_k$   & 0 & $-1$ & 6 & $-34$ & 212 & $-1315$ & 8190 & $-50948$ & 317036 & $-1972637$ \\ \hline
	\end{tabular}
	\caption{Tribonacci trinomial transform triangle}
	\label{tab:tribo}
\end{table}

Easy to see that $a_n^k= a_{n-1}^{k+2} = \cdots = a_{0}^{k+2n}=t_{k+2n}$, and from it with the statements in the previous section, we have the following corollaries.

\begin{corollary} In the case $3\leq n\leq k$, we obtain
	\begin{eqnarray*}
		a_n^k&=&7 a_{n-1}^{k-1}-5  a_{n-2}^{k-2}+a_{n-3}^{k-3},\\
		a_n^k&=&3 a_{n-1}^k+   a_{n-2}^k+  a_{n-3}^k
	\end{eqnarray*}
	Moreover, the terms of sequence $(a_n^k)_{n=0}^{k}$ are every second terms of the Tribonacci numbers, so that $a_n^n=t_{3n}$.
\end{corollary}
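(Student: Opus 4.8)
The plan is to obtain all three assertions as specializations of results already proved in this section, so that the only genuine computation is the evaluation of the variables $\mathcal{A},\mathcal{B},\mathcal{C},\mathcal{P},\mathcal{Q}$ at $\alpha=\beta=\gamma=1$. Before doing that I would first secure the closed form $a_n^k=t_{k+2n}$, since it underlies the remark ``easy to see'' immediately preceding the corollary and directly yields the final claim.

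First I would prove $a_n^k=t_{k+2n}$ by induction on $n$. The base case $n=0$ is just $a_0^k=t_k$ by definition. For the inductive step I would apply the defining relation \eqref{eq:defsum} together with the hypothesis $a_{n-1}^{j}=t_{j+2n-2}$ to write
\begin{equation*}
a_n^k=a_{n-1}^{k-1}+a_{n-1}^{k}+a_{n-1}^{k+1}=t_{k+2n-3}+t_{k+2n-2}+t_{k+2n-1},
\end{equation*}
and then recognize the right-hand side as $t_{k+2n}$ via the Tribonacci recurrence $t_m=t_{m-1}+t_{m-2}+t_{m-3}$. Putting $k=n$ gives at once $a_n^n=t_{3n}$, the closing statement of the corollary.

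For the first (diagonal) recurrence I would substitute $\alpha=\beta=\gamma=1$ into the definitions of $\mathcal{A},\mathcal{B},\mathcal{C}$, obtaining $\mathcal{A}=7$, $\mathcal{B}=-5$, $\mathcal{C}=1$, and then invoke Theorem~\ref{th:rec_ABC} with $\ell=k-n$, which turns $a_n^{n+\ell}=\mathcal{A}a_{n-1}^{n+\ell-1}+\mathcal{B}a_{n-2}^{n+\ell-2}+\mathcal{C}a_{n-3}^{n+\ell-3}$ into exactly $a_n^k=7a_{n-1}^{k-1}-5a_{n-2}^{k-2}+a_{n-3}^{k-3}$. For the second (column) recurrence I would compute $\mathcal{P}=\alpha\gamma-\beta+3\gamma=3$ and $\mathcal{Q}=\alpha\beta-2\alpha\gamma+\beta\gamma-\alpha+2\beta=1$, note that $\mathcal{C}=1$ and $\gamma=1$, and apply Theorem~\ref{th:column}; the divisions by $\gamma$ are trivial and the coefficients collapse to $3,1,1$ as asserted.

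There is no serious obstacle here, since the corollary is purely a specialization of the general theory; the only care needed is bookkeeping—checking that the index shifts in Theorem~\ref{th:rec_ABC} (stated along the diagonals $a_n^{n+\ell}$) and in Theorem~\ref{th:column} (stated along columns) are correctly translated into the $a_n^k$ notation, and that the stated range $3\le n\le k$ matches the hypotheses of those theorems. As an independent cross-check I would note that, writing $m=k+2n$, the closed form $a_n^k=t_{k+2n}$ reduces the diagonal relation to $t_m=7t_{m-3}-5t_{m-6}+t_{m-9}$ and the column relation to $t_m=3t_{m-2}+t_{m-4}+t_{m-6}$, both of which follow by iterating the Tribonacci recurrence.
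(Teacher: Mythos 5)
Your proposal is correct and matches the paper's (largely implicit) argument: the paper likewise rests on the identity $a_n^k=a_{n-1}^{k+2}=\cdots=a_0^{k+2n}=t_{k+2n}$ (your induction is just this observation spelled out) and obtains both recurrences by specializing Theorem~\ref{th:rec_ABC} and Theorem~\ref{th:column} at $\alpha=\beta=\gamma=1$, where indeed $\mathcal{A}=7$, $\mathcal{B}=-5$, $\mathcal{C}=1$, $\mathcal{P}=3$, $\mathcal{Q}=1$. Your closing cross-check via $t_m=7t_{m-3}-5t_{m-6}+t_{m-9}$ and $t_m=3t_{m-2}+t_{m-4}+t_{m-6}$ is a nice addition not in the paper.
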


\begin{corollary} 
	The trinomial transform sequence $(b_n)$ of the Tribonacci sequence is the ternary sequence  $b_n=6b_{n-1}-4b_{n-2}+b_{n-3}$ with initial terms $b_0=0$, $b_1=1$, $b_2=7$. (The main diagonal of Table~\ref{tab:tribo}.) 
\end{corollary}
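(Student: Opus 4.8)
The plan is to obtain this corollary as the $\alpha=\beta=\gamma=1$ specialization of the Main corollary (Corollary~\ref{th:trasform_of_ternary}), with no independent argument required. The Tribonacci recurrence $t_k=t_{k-1}+t_{k-2}+t_{k-3}$ is exactly the ternary recurrence \eqref{eq:def_ternary_seq} with $\alpha=\beta=\gamma=1$, and the base data are $(a_0,a_1,a_2)=(0,0,1)$. Since the Main corollary already asserts that the trinomial transform $(b_n)$ of \emph{any} sequence obeying \eqref{eq:def_ternary_seq} is the third-order linear recurrent sequence $b_n=\mathcal{A}b_{n-1}+\mathcal{B}b_{n-2}+\mathcal{C}b_{n-3}$ with the three displayed initial values, the whole task reduces to two substitutions into known formulas.

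Concretely, first I would substitute $\alpha=\beta=\gamma=1$ into the three polynomials $\mathcal{A},\mathcal{B},\mathcal{C}$ given in the display preceding Theorem~\ref{th:rec_ABC} and read off the resulting integer coefficients of the third-order recurrence for $(b_n)$. Second, I would substitute the same values, together with $(a_0,a_1,a_2)=(0,0,1)$, into the initial-value expressions of the Main corollary: this yields $b_0=a_0=0$, $b_1=a_0+a_1+a_2=1$, and $b_2=(\alpha\gamma+2\gamma+1)a_0+(\alpha\beta+2\beta+\gamma+2)a_1+(\alpha^2+2\alpha+\beta+3)a_2=7$, where the $a_0$- and $a_1$-contributions vanish because $a_0=a_1=0$. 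These reproduce precisely the three starting terms $b_0=0$, $b_1=1$, $b_2=7$ asserted in the corollary, and invoking the Main corollary then completes the proof. (Equivalently, the initial values can be read directly off the leftmost nodes of Figure~\ref{fig:steps_trinomial} with $x=0$, $y=0$, $z=1$.)

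There is essentially no obstacle here: the statement is a direct corollary, so the only thing to get right is the arithmetic of the two substitutions. The single safeguard I would build in is a numerical cross-check against the main diagonal of Table~\ref{tab:tribo}: starting from $b_0=0$, $b_1=1$, $b_2=7$, the recurrence must regenerate $b_3=a_3^3=44$, then $b_4=274$, and so on, which pins the coefficients down unambiguously and would immediately expose any slip in evaluating $\mathcal{A},\mathcal{B},\mathcal{C}$. I would deliberately \emph{not} include any assertion that this third-order relation is minimal or that its characteristic polynomial is irreducible, since the corollary only claims that $(b_n)$ satisfies this particular recurrence with these initial values, and that conclusion follows from the Main corollary with no such extra input.
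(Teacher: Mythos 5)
Your route --- obtaining the corollary as the $\alpha=\beta=\gamma=1$ specialization of the Main corollary (Corollary~\ref{th:trasform_of_ternary}) with $(a_0,a_1,a_2)=(0,0,1)$ --- is exactly the paper's intended derivation, and your evaluation of the initial values is correct: $b_0=a_0=0$, $b_1=a_0+a_1+a_2=1$, and $b_2=(\alpha^2+2\alpha+\beta+3)a_2=7$. The problem is the one step you deferred, ``substitute into $\mathcal{A},\mathcal{B},\mathcal{C}$ and read off the coefficients'': carried out, it does \emph{not} reproduce the quoted statement. With $\alpha=\beta=\gamma=1$ one gets $\mathcal{A}=1+1+2+3=7$, $\mathcal{B}=-2+1+2-1-2-3+3-3=-5$, and $\mathcal{C}=1-1-1+1-1+1+1+1-2+1=1$, so the Main corollary yields $b_n=7b_{n-1}-5b_{n-2}+b_{n-3}$, not the printed $b_n=6b_{n-1}-4b_{n-2}+b_{n-3}$.

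In other words, your proposal as written asserts a false arithmetical outcome, and your own safeguard would have exposed it: with the printed coefficients, $6\cdot7-4\cdot1+1\cdot0=38\neq44=b_3$, whereas $7\cdot7-5\cdot1+1\cdot0=44$, $7\cdot44-5\cdot7+1\cdot1=274$, and $7\cdot274-5\cdot44+1\cdot7=1705$ regenerate the main diagonal of Table~\ref{tab:tribo}. The conclusion is that the corollary as printed contains a typo (apparently carried over from the Fibonacci case, where the transform satisfies $b_n=6b_{n-1}-4b_{n-2}$); this is corroborated by the immediately preceding corollary in the same subsection, which records the same specialization of Theorem~\ref{th:rec_ABC} as $a_n^k=7a_{n-1}^{k-1}-5a_{n-2}^{k-2}+a_{n-3}^{k-3}$, and by the identity $b_n=t_{3n}$ together with the fact that the characteristic roots of $(b_n)$ are $\omega^2+\omega+1=\omega^3$ for each Tribonacci root $\omega$, whose cubes satisfy $x^3-7x^2+5x-1=0$. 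So your method is the right one and identical in substance to the paper's, but as a proof of the statement as quoted it fails at the unexecuted substitution; executed honestly, it proves the corrected corollary with coefficients $7$, $-5$, $1$ and the same initial values $b_0=0$, $b_1=1$, $b_2=7$.
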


In OEIS the trinomial transform of the Tribonacci numbers is the sequence \seqnum{A192806} $(1, 1, 4, 24, 149, 927,\ldots)$, which is the third diagonal in Table~\ref{tab:fib}  with an additional first term.

\begin{corollary}  For the sum and alternating sum of columns, we have
	\begin{eqnarray*}
		s_n&=& \sum_{i=0}^{n}t_{n+2i}=\sum_{i=0}^{2n} {\binomtwos{n}{i}} t_{i}=\sum_{i=0}^{n} \sum_{k=0}^{2i} \binomtwo{i}{k}t_{n+k-i},
		\\
		\bar{s}_n&=& \sum_{i=0}^{n}(-1)^i t_{n+2i}=\sum_{i=0}^{n} (-1)^i \sum_{k=0}^{2i} \binomtwo{i}{k}t_{n+k-i}.
	\end{eqnarray*}
	In addition,	
	\begin{eqnarray*}
		s_n&=&8s_{n-1}-11s_{n-2}-3s_{n-4}+4s_{n-5}-s_{n-6} \qquad (n\ge6),\\
		\bar{s}_n&=&-6\bar{s}_{n-1}+3\bar{s}_{n-2}+12\bar{s}_{n-2}+13\bar{s}_{n-4}+6\bar{s}_{n-5}+\bar{s}_{n-6} \qquad (n\ge6),
	\end{eqnarray*}
	with initial values $s_0=0$, $s_1=1$,  $s_2=10$, $s_3=62$,  $s_4=388$, $s_5=2419$, and $\bar{s}_0=0$, $\bar{s}_1=-1$,  $\bar{s}_2=6$, $\bar{s}_3=-34$,  $\bar{s}_4=212$, $\bar{s}_5=-1315$.
\end{corollary}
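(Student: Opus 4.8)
The plan is to read this corollary off as a specialization of the general machinery of Sections~3 and~4 to the Tribonacci parameters $\alpha=\beta=\gamma=1$ together with the initial data $t_0=t_1=0$, $t_2=1$; no fresh induction is required, since every ingredient has already been established in full generality. First I would settle the three closed forms for $s_n$ (and their alternating analogues), and then obtain the two sixth-order recurrences by evaluating the general recurrence coefficients at the Tribonacci values of $\mathcal{A}$, $\mathcal{B}$, $\mathcal{C}$.

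For the closed forms I would start from the identity $a_i^n=t_{n+2i}$, which is the content of the preceding corollary (there $a_n^k=t_{k+2n}$). Summing over the column gives $s_n=\sum_{i=0}^{n}a_i^n=\sum_{i=0}^{n}t_{n+2i}$, and inserting the alternating signs gives $\bar{s}_n=\sum_{i=0}^{n}(-1)^i t_{n+2i}$. The middle expression $\sum_{i=0}^{2n}\binomtwos{n}{i}t_i$ is simply the final column-sum theorem of Section~3, namely $s_n=\sum_{\ell=0}^{2n}\binomtwos{n}{\ell}a_0^{\ell}$, specialized at $a_0^{\ell}=t_{\ell}$; likewise the double-sum expression $\sum_{i=0}^{n}\sum_{k=0}^{2i}\binomtwo{i}{k}t_{n+k-i}$ is the earlier theorem $s_n=\sum_{i=0}^{n}\sum_{k=0}^{2i}\binomtwo{i}{k}a_0^{n+k-i}$ at $a_0^{n+k-i}=t_{n+k-i}$, and the alternating double sum is precisely the displayed definition of $\bar{s}_n$ used just before the alternating-sum theorem.

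For the recurrences I would first evaluate the three auxiliary quantities, obtaining $\mathcal{A}=7$, $\mathcal{B}=-5$, $\mathcal{C}=1$ from their defining expressions at $\alpha=\beta=\gamma=1$. Substituting these into the six coefficients of the general relation~\eqref{eq:sn} yields $\alpha+\mathcal{A}=8$, $\mathcal{B}-\alpha\mathcal{A}+\beta=-11$, $-(\alpha\mathcal{B}+\beta\mathcal{A}-\mathcal{C}-\gamma)=0$, $-(\alpha\mathcal{C}+\beta\mathcal{B}+\gamma\mathcal{A})=-3$, $-(\beta\mathcal{C}+\gamma\mathcal{B})=4$ and $-\gamma\mathcal{C}=-1$; the vanishing of the $s_{n-3}$ coefficient is exactly what collapses the recurrence to the stated five-term form. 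The same substitution into the alternating-sum recurrence of the preceding theorem gives $\alpha-\mathcal{A}=-6$, $\mathcal{B}+\alpha\mathcal{A}+\beta=3$, $-(\alpha\mathcal{B}-\beta\mathcal{A}+\mathcal{C}-\gamma)=12$, $\alpha\mathcal{C}-\beta\mathcal{B}+\gamma\mathcal{A}=13$, $\beta\mathcal{C}-\gamma\mathcal{B}=6$ and $\gamma\mathcal{C}=1$, so that the coefficient $12$ multiplies $\bar{s}_{n-3}$. Finally the six required initial values $s_0,\dots,s_5$ and $\bar{s}_0,\dots,\bar{s}_5$ are read straight off the last two rows of Table~\ref{tab:tribo}.

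The work here is entirely arithmetic bookkeeping, so the main thing to guard against is a slip in one of the twelve coefficient evaluations; in particular I expect the delicate points to be confirming that the $s_{n-3}$ coefficient really cancels (it equals $-(\alpha\mathcal{B}+\beta\mathcal{A}-\mathcal{C}-\gamma)=-(-5+7-1-1)=0$) and tracking the correct index on the coefficient $12$ in the alternating recurrence. Since these numerical coefficients, the two closed forms, and the initial values all follow from theorems already proved, there is no genuine analytic obstacle beyond this verification.
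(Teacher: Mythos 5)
Your proposal is correct and follows exactly the route the paper intends: this corollary is stated without a separate proof precisely because it is the specialization of the general column-sum theorems and the two sixth-order recurrences to $\alpha=\beta=\gamma=1$ (giving $\mathcal{A}=7$, $\mathcal{B}=-5$, $\mathcal{C}=1$), combined with $a_n^k=t_{k+2n}$ from the preceding corollary. Your coefficient evaluations are all right, including the observation that the $s_{n-3}$ coefficient vanishes and that the coefficient $12$ must multiply $\bar{s}_{n-3}$ — the paper's displayed ``$12\bar{s}_{n-2}$'' is in fact a typo that your computation correctly exposes.
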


\subsection{The constant sequence 1}

In this subsection, we give Table~\ref{tab:const} generated by the constant sequence 1  (\seqnum{A000012}) and the expressions that prove Theorem~\ref{th:sum_trinom_triangle}. 

\begin{table}[h]
	\centering
	\setlength{\tabcolsep}{3pt}
	\begin{tabular}{|c|cccccccccc|}\hline
		& 0           & 1          & 2          & 3           & 4            & 5            & 6             & 7             & 8              & 9                          \\ \hline
		0         & 1          & 1          & 1           & 1           & 1            & 1            & 1             & 1             & 1             & 1              \\
		1         &            & 3          & 3           & 3           & 3            & 3            & 3             & 3             & 3             & 3              \\
		2         &            &            & 9           & 9           & 9            & 9            & 9             & 9             & 9             & 9              \\
		3         &            &            &             & 27          & 27           & 27           & 27            & 27            & 27            & 27             \\
		4         &            &            &             &             & 81           & 81           & 81            & 81            & 81            & 81             \\
		5         &            &            &             &             &              & 243          & 243           & 243           & 243           & 243            \\
		6         &            &            &             &             &              &              & 729           & 729           & 729           & 729            \\
		7         &            &            &             &             &              &              &               & 2187          & 2187          & 2187           \\
		8         &            &            &             &             &              &              &               &               & 6561          & 6561           \\
		9         &            &            &             &             &              &              &               &               &               & 19683                         \\\hline
		$s_k$       & \textbf{1} & \textbf{4} & \textbf{13} & \textbf{40} & \textbf{121} & \textbf{364} & \textbf{1093} & \textbf{3280} & \textbf{9841} & \textbf{29524}\\
		$\bar{s}_k$   & 1& $-2$& 7& $-20$& 61& $-182$& 547& $-1640$& 4921& $-14762$\\ \hline
	\end{tabular}
	\caption{Trinomial transform triangle generated by the constant sequence 1} 
	\label{tab:const}
\end{table}

One can easily see that $a_n^k=3^n$ (\seqnum{A000244}). It implies that the trinomial transform sequence of the constant sequence 1  is the sequence $3^n$. Moreover, the columns also form geometric sequences with common ratio $3$. Thus 

\begin{eqnarray*}
	s_{n} &=& \sum_{i=0}^{2n} {\binomtwos{n}{i}} =\sum_{i=0}^{n} \sum_{k=0}^{2i} \binomtwo{i}{k}=\sum_{i=0}^{n}3^i=\frac{3^{n+1}-1}{2}\quad \text{(\seqnum{A003462})},\label{eq:sum_const}\\
	\bar{s}_n &=& \sum_{i=0}^{n}  \sum_{k=0}^{2i} \binomtwo{i}{k}(-1)^i=\sum_{i=0}^{n}(-3)^i=\frac{3(-3)^n+1}{4}\quad \text{(\seqnum{A014983})},\label{eq:altsum_const}
\end{eqnarray*}

\subsection{Natural numbers}

Let $a_0^k=k$, the non-negative integers (\seqnum{A001477}). We obtain that $a_n^k=k3^n$, and the trinomial transform of the natural sequence is the sequence $n3^n$ (\seqnum{A036290}).

In this case, the columns also form geometric sequences with common ratio $3$ (see Table~\ref{tab:natur}). Thus 
\begin{eqnarray*}
	s_{n} &=& \sum_{i=0}^{2n} i {\binomtwos{n}{i}} =\sum_{i=0}^{n} \sum_{k=0}^{2i} n \binomtwo{i}{k}= \sum_{i=0}^{n}n3^i=n\frac{3^{n+1}-1}{2}, \label{eq:sum_natural}\\
	\bar{s}_n &=&\sum_{i=0}^{n}  \sum_{k=0}^{2i} \binomtwo{i}{k}(-1)^in = \sum_{i=0}^{n}n(-3)^i= n\frac{3(-3)^n+1}{4}. \label{eq:altsum_natural}
\end{eqnarray*}

\begin{table}[h]
	\centering
	\setlength{\tabcolsep}{3pt}
	\begin{tabular}{|c|cccccccccc|}\hline
		& 0           & 1          & 2          & 3           & 4            & 5            & 6             & 7             & 8              & 9                          \\ \hline
		0         & 0          & 1          & 2           & 3            & 4            & 5             & 6             & 7              & 8              & 9               \\
		1         &            & 3          & 6           & 9            & 12           & 15            & 18            & 21             & 24             & 27              \\
		2         &            &            & 18          & 27           & 36           & 45            & 54            & 63             & 72             & 81              \\
		3         &            &            &             & 81           & 108          & 135           & 162           & 189            & 216            & 243             \\
		4         &            &            &             &              & 324          & 405           & 486           & 567            & 648            & 729             \\
		5         &            &            &             &              &              & 1215          & 1458          & 1701           & 1944           & 2187            \\
		6         &            &            &             &              &              &               & 4374          & 5103           & 5832           & 6561            \\
		7         &            &            &             &              &              &               &               & 15309          & 17496          & 19683           \\
		8         &            &            &             &              &              &               &               &                & 52488          & 59049           \\
		9         &            &            &             &              &              &               &               &                &                & 177147          \\ \hline
		$s_k$   & \textbf{0} & \textbf{4} & \textbf{26} & \textbf{120} & \textbf{484} & \textbf{1820} & \textbf{6558} & \textbf{22960} & \textbf{78728} & \textbf{265716}\\
		$\bar{s}_k$ &  0& $-2$& 14& $-60$& 244& $-910$& 3282& $-11480$& 39368& $-132858$ \\ \hline
	\end{tabular}
	\caption{Trinomial transform triangle generated by natural numbers}
	\label{tab:natur}
\end{table}

\subsection{Two other sequences}

In this subsection, we give two cases without tables, whose trinomial transform sequences are the all 1's sequence and the natural numbers mentioned in the previous subsections. We gain the results with very easy calculations.

First, let  $a_0^k=(-1)^k$ (\seqnum{A033999}). Then $a_n^k=(-1)^{n+k}$ and $b_n=1$. 
\begin{eqnarray*}
	s_{n} &=& \sum_{i=0}^{2n} (-1)^i {\binomtwos{n}{i}} =\sum_{i=0}^{n} \sum_{k=0}^{2i} (-1)^{n+k-i} \binomtwo{i}{k}= \sum_{i=0}^{n}(-1)^{n+i}=
	\begin{cases}
		0,& \text{if $n$ is odd};\\
		1,& \text{if $n$ is even}.
	\end{cases}  \\
	\bar{s}_n &=&\sum_{i=0}^{n}  \sum_{k=0}^{2i} \binomtwo{i}{k}(-1)^i = \sum_{i=0}^{n}1= n+1. 
\end{eqnarray*}

Second, let  $a_0^k=(-1)^kk$ (\seqnum{A038608}). Then $a_n^k=(-1)^{n+k}k$ and $b_n=n$. 
\begin{eqnarray*}
	s_{n} &=& \sum_{i=0}^{2n} (-1)^i i{\binomtwos{n}{i}} =\sum_{i=0}^{n} \sum_{k=0}^{2i} (-1)^{n+k-i} n \binomtwo{i}{k}= \sum_{i=0}^{n}(-1)^{n+i}n\\
	&=&
	\begin{cases}
		0,& \text{if $n$ is odd or $n=0$};\\
		n,& \text{otherwise}.
	\end{cases}  \\
	\bar{s}_n &=&n\sum_{i=0}^{n}  \sum_{k=0}^{2i} \binomtwo{i}{k}(-1)^i = \sum_{i=0}^{n}n= n^2+n \qquad (\seqnum{A002378}). 
\end{eqnarray*}

\end{document}